\newcommand{\q}{\quad}
\newcommand{\qqq}{\quad\quad\quad}
\newcommand{\norm}[2]{{\left\| #1 \right\|}_{#2}}
\newcommand{\al}{\alpha}
\newcommand{\de}{\delta}
\newcommand{\ez}{\epsilon}
\newcommand{\la}{\lambda}
\newcommand{\cm}{\mathcal M}
\newcommand{\cb}{\mathcal B}
\newcommand{\cd}{\mathcal D}
\newcommand{\cq}{\mathcal Q}
\newcommand{\crec}{\mathcal R}
\newcommand{\lab}{\label}
\newcommand{\dint}{\displaystyle\int}
\newcommand{\f}{\frac}
\newcommand{\nf}{\infty}
\newcommand{\rrr}{\mathbf R}
\newcommand{\rn}{\mathbf R^n}
\newcommand{\ovec}{\overrightarrow}
\def\rr{{\mathbb R}}
\def\nn{{\mathbb N}}
\def\lf{\left}
\def\r{\right}
\def\hs{\hspace{0.3cm}}
\def\ls{\lesssim}
\newtheorem{theorem}{Theorem}[section]
\newtheorem{lemma}[theorem]{Lemma}
\newtheorem{proposition}[theorem]{Proposition}
\newtheorem{corollary}[theorem]{Corollary}
\theoremstyle{definition}
\newtheorem{definition}[theorem]{Definition}
\theoremstyle{remark}
\newtheorem{remark}[theorem]{Remark}
\numberwithin{equation}{section}
\begin{document}

\title{A $B_p$ condition for the strong maximal function}

\author{Liguang Liu}
\address{Liguang Liu\\
        Department of Mathematics\\
        School of Information\\
        Renmin University of China\\
        Beijing 100872\\
        P. R. China}
\email{liuliguang@ruc.edu.cn}

\author{Teresa Luque}
\address{Teresa Luque\\
Departamento De An\'alisis Matem\'atico, Facultad de Matem\'aticas,
Universidad De Sevilla, 41080 Sevilla, Spain.}
\email{tluquem@us.es}

\thanks{The first author is supported by National Natural Science
Foundation of China (Grant No. 11101425).
The second author is supported by the Spanish Ministry of
Science and Innovation Grant BES-2010-030264.}

\subjclass[2010]{Primary  42B20, 42B25. Secondary 46B70, 47B38.}

\keywords{Strong maximal operators, $B_p$ condition, bump condition.}

\date{}

\dedicatory{}

\begin{abstract}
A strong version of the Orlicz maximal
operator is introduced and a natural $B_p$ condition for
the rectangle case is defined to characterize its boundedness.
 This fact let us to describe a sufficient condition for the two
 weight inequalities of the strong maximal function in terms of
 power and logarithmic bumps. Results for the multilinear version
 of this operator and for others multi(sub)linear maximal functions
 associated with bases of open sets are also studied.
\end{abstract}

\maketitle

\section{Introduction}

As it is well-known, Sawyer (\cite{S}) characterized
the pair of weights $(u,v)$ for which the Hardy-Littlewood
maximal operator, $M$, is a bounded operator from $L^p(v)$
to $L^p(u)$ for $1<p<\nf$. He showed that $M:L^p(v^p)\rightarrow L^p(u^p)$
if and only if $(u,v)$ satisfies the testing condition
\begin{equation}\label{Sp}
\sup_Q\f{\int_Q (uM(\chi_Q v^{-p'}))^pdx}{v^{-p'}(Q)}<\nf.
\end{equation}
On the other hand, it is also known that the two weight Muckenhoupt
condition $A_p$,
\[\sup_Q\lf(\f{1}{|Q|}\dint_Q u^p dx\r)^{1/p}
\lf(\f{1}{|Q|}\dint_Q v^{-p'} dx\r)^{1/p'}<\nf,\]
is necessary but not sufficient for the maximal operator
to be strong type $(p,p)$. The fact that Sawyer's condition
involves the maximal operator itself makes  it often difficult to
test in practice. Therefore, though this condition characterizes
completely the two weight problem, it would be more useful to look
for sufficient conditions close in form to the $A_p$ condition.
The first step in this direction was due to Neugebauer (\cite{N1}):
he noticed that if the pair of weights $(u,v)$ is such that for $r>1$,
\[\sup_Q\lf(\f{1}{|Q|}\dint_Q u^{pr} dx\r)^{1/pr}
\lf(\f{1}{|Q|}\dint_Q v^{-p'r} dx\r)^{1/p'r}<\nf\]
for all cubes, then
\begin{equation}\label{strong}
\dint_{\rn}\lf(uMf\r)^p dx\le C \dint_{\rn}\lf(vf\r)^p dx
\end{equation}
for all nonnegative functions $f$ and some positive constant $C$ independent of $f$.
If for a given cube $Q$ we define the normalized $L^p$ norm by
\[\norm{u}{p,Q}:=\lf(\f{1}{|Q|}\dint_Q u^p \,dx\r)^{1/p},\]
then Neugebauer's condition  can be restated in terms
of a normalized $L^p$ norm as follows:
\begin{equation}\label{Neugen}
\norm{u}{pr,Q}\norm{v^{-1}}{p'r,Q}<\nf.
\end{equation}
Notice that the $A_p$ condition can also be rewritten as
\begin{equation}\label{Ap}
\norm{u}{p,Q}\norm{v^{-1}}{p',Q}<\nf.
\end{equation}
This tell us that if we replace the average $L^p$ and
$L^{p'}$ norms in (\ref{Ap}) by some stronger ones,
then we can get a condition that is sufficient for (\ref{strong}) to hold.
At the same time, this new condition preserves the geometric structure
of the classical $A_p$. Conditions like (\ref{Neugen}) are known as
power bump conditions because the norms involved in the two weight $A_p$
condition are ``bumped up'' in the Lebesgue scale.

Motivated by \cite{N1}, \cite{CWW} and \cite{F}, P\'erez (\cite{P3}, \cite{P1})
generalized these last  conditions replacing the localized norms in (\ref{Ap})
by some other larger than the $L^p$ one, but not as big as the $L^{pr}$.
Indeed, he proved that it was enough to substitute only the norm
associated to the weight $v^{-1}$ by a stronger one defined in
terms of certain Banach function spaces $X$ with an appropriate boundedness property.

To be more precise, we let $\Phi$ be a Young function (cf. section \ref{sBp})
and  define the normalized Luxemburg norm on a cube $Q$ by
\[\norm{u}{\Phi,Q}:=
\inf\left\{\la>0:\f{1}{|Q|}\dint_Q\Phi\lf(\f{u}{\la}\r)\,dx\leq 1\right\}.\]
Associated with each Young function $\Phi$,  one can define a complementary function
\begin{equation}\label{complementary}
\bar \Phi(s):=\sup_{t>0}\{st-\Phi(t)\}
\end{equation}
for $s\ge 0$. Such $\bar \Phi$ is also a Young function and it plays,
together with the class $B_p$, an important role in the generalization
of these bump conditions. Recall that a Young function $\Phi\in B_p$
if there is a positive constant $c$ for which
\begin{equation}\label{Bp}
\dint_c^{\nf}\f{\Phi(t)}{t^p}\f{dt}{t}<\nf.
\end{equation}
This growth condition was introduced in \cite{P3} where it
was proved that if $\Phi$ is a Young function such that $\bar\Phi\in B_p$,
and $(u,v)$ is a pair of weights such that
\begin{equation}\label{Ap_M}
\sup_Q\norm{u}{p,Q}\norm{v^{-1}}{\Phi,Q}<\nf
\end{equation}
for every cube $Q$, then \eqref{strong} holds. Moreover the $B_p$
condition is sharp in the sense that: if $M$ is strong $(p, p)$
and $(u, v)$ satisfy (\ref{Ap_M}), then $\bar\Phi\in B_p$.
This result has been generalized very recently to the  more
general context of Banach function spaces by P\'erez and Mastylo (\cite{MP}).
For a more complete account of all this  we refer to the recent book \cite{CMP2}.

Besides its inherent significance for this problem,
for many other operators conditions like (\ref{Ap_M})
have resulted  in optimal sufficient
conditions for weak and strong type inequalities. In general,
given any pair of weights $(u,v)$ we will define the $A_p$ bump condition as
\begin{equation}\label{Ap_bump}
\sup_Q\norm{u}{\Psi,Q}\norm{v^{-1}}{\Phi,Q}<\nf
\end{equation}
where $\Psi$ and $\Phi$ are Young functions such that
$\bar{\Psi}\in B_{p'}$ and/or $\bar{\Phi}\in B_p$ and $Q$
is any cube $\rn$. This or related conditions are being used
in the study of operators more singular than the Hardy-Littlewood
maximal function. The first case was considered by P\'erez in \cite{P5}
for fractional integral operators where a fractional version of \eqref{Ap_bump}
was used to obtain a two weight $L^p$ estimate. The same problem for the Hilbert
transform was proved in \cite{CMP}  and by different methods in  \cite{CMP3}
for any Calder\'on-Zygmund operator with $C^1$ kernel.
Very recently the solution was extended in \cite{CRV} to
the Lipschitz case, proved full in generality in \cite{L1}
and further improved in \cite{HP} with a better control on the bounds.

The main goal of this paper is to study the two weight norm inequalities
for the boundedness of the strong maximal function
using an appropriate $B_p$ condition. We define this operator as
\begin{equation}\label{strongM}
M_\crec f(x):=\sup_{R\ni x, R\in\crec} \frac1{|R|}\int_R |f(y)|\,dy.
\end{equation}
where $f$ is a locally integrable function and the supremum
is taken over all rectangles $R$ with sides parallel to
the coordinate axes. The corresponding two weight problem
for the strong maximal function was characterized by Jawerth
(see \cite{J}) in terms of a testing condition that it is even harder
to verify than Sawyer's condition (\ref{Sp}). The problem was
also solved in \cite{P2} with a more similar approach to the
one that we study here. It was proved that if $(u,v)$ is a
couple of weights satisfying the power condition for some $r>1$
\begin{equation}\label{Ap,r}
\lf(\f{1}{|R|}\dint_R u^p\,dx\r)^{1/p}\lf(\f{1}{|R|}\dint_R v^{-p'r}\,dx\r)^{1/p'}<\nf
\end{equation}
for every rectangle $R$, and suppose
that $u^p$ satisfies the condition $(A)$: there are
constants $0 < \lambda < 1$ and $0 <   c(\lambda) < \infty$ such that
\[u^p \left( \{ x \in \rrr^{n} : M_\crec
(\chi_{E})(x)> \lambda \} \right) \le c(\la)\, u^p(E) \leqno(A)\]
for all measurable sets $E$, then
$M_\crec:L^p(v^p)\rightarrow L^p(u^p)$. In this case,
the strong weighted estimate is obtained from weak type
 ones using interpolation and the fact that there is a
 reverse H\"older's property for the weights that verifies (\ref{Ap,r}).
 However, this good property disappears if we substitute the $L^{pr}$-norm
associated to the weight $v^{-1}$ by a weaker one.
Therefore, we will need a different approach to
solve the two weight problem with general bump conditions.

To state the main result of this article, we define
the class of Young functions that enable us to obtain
bump conditions in the rectangle case.
\begin{definition}\label{d1.1}
Let $1<p<\infty$. A Young function $\Phi$ is said
to satisfy the strong $B_p^{*}$ condition,
if there is a positive constant $c$ such that
\begin{equation}\label{Strong-Bp}
\int_c^\nf \frac{\Phi_n(\Phi(t))}{t^{p}}\,\frac{dt}t<\nf,
\end{equation}
where $\Phi_n(t):= t[\log(e+t)]^{n-1}\sim
t[1+(\log^+t)^{n-1}]$ for all $t>0$.
In this case, we say that $\Phi\in B_p^\ast$.
\end{definition}

Then we have the following result.
\begin{theorem}\label{Two-weight}
Let $1<p<\infty$, and let $\Phi$ be a Young function
such that the complementary Young function $\overline{\Phi}$
satisfies the condition (\ref{Strong-Bp}).
\begin{itemize}
\item[(i)] Let $(u,v)$ be a couple of weights such that
$u^p$ satisfies the condition $(A)$ and
\begin{equation}\label{2-weight}
\left(\frac{1}{|R|}\displaystyle\int_R u^p\right)^{1/p} \norm{v^{-1}}{\Phi, R}\leq K,
\end{equation}
for some positive constant $K$ and for all rectangles $R$. Then
there is a constant $C$ such that
\[\int_{\mathbb{R}^n}\left(uM_{\crec}f\right)^p\,dx\leq
C\int_{\mathbb{R}^n}\left(vf \right)^p\,dx,\]
for all non-negative functions $f$.
\item[(ii)] Condition (\ref{Strong-Bp}) is also necessary.
That is, suppose that $\Phi$ has the property that
$M_{\crec}:\,L^p(v^p)\rightarrow L^p(u^p)$ whenever
the couple of weights $(u,v)$ satisfies
\begin{equation*}
\left(\frac{1}{|R|}\displaystyle\int_R u^p\right)^{1/p} \norm{v^{-1}}{\Phi, R}\leq K,
\end{equation*}
for some positive constant $K$ and for all rectangles $R$; then $\bar\Phi\in B_p^*$.
\end{itemize}
\end{theorem}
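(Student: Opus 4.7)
\emph{Plan for part (i).} The plan is a Pérez-type argument that uses the bump condition~\eqref{2-weight} together with generalized Hölder in Orlicz norms to absorb the $v$-side and reduce the two-weight problem to an unweighted $L^p$ bound for the Orlicz strong maximal operator $M_{\bar\Phi,\crec}$ built from $\bar\Phi$. First I would apply generalized Hölder,
\[
\f{1}{|R|}\dint_R f \le 2\,\norm{fv}{\bar\Phi,R}\,\norm{v^{-1}}{\Phi,R},
\]
and feed in~\eqref{2-weight} to obtain, for every rectangle $R$,
\[
\lf(\f{1}{|R|}\dint_R f\r)^{p}\dint_R u^p \le (2K)^p\,|R|\,\norm{fv}{\bar\Phi,R}^p.
\]
This converts the goal $\int u^p(M_\crec f)^p\le C\int (vf)^p$ into the problem of summing the right-hand side over a well-chosen covering of the level sets $\Om_k=\{M_\crec f>2^k\}$.

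\emph{Geometric step and closing.} The main obstacle is that rectangles do not admit Besicovitch- or Vitali-type sparse coverings. I would circumvent this by a Córdoba--Fefferman selection applied to each $\Om_k$, producing rectangles $R_{k,j}$ with $\langle f\rangle_{R_{k,j}}>2^k$ whose overlap is controlled in the $\exp L^{1/(n-1)}$ scale. This is precisely dual to the Young function $\Phi_n(t)=t[\log(e+t)]^{n-1}$ in Definition~\ref{d1.1}, which is why the extra $\Phi_n$ built into~\eqref{Strong-Bp} is exactly the correct condition for the rectangle basis. Hypothesis $(A)$ on $u^p$ would then be used to upgrade this Lebesgue-measure control to a $u^p$-measure statement (the familiar $A_\infty$-type transfer $u^p(\Om_k\setminus\Om_{k+1})\ge\de\,u^p(\Om_k)$), compensating for the missing sparse Lebesgue decomposition. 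Summing the level-set bounds and combining with the Hölder step reduces the problem to
\[
\intrn u^p(M_\crec f)^p\,dx \le C\intrn \lf(M_{\bar\Phi,\crec}(fv)\r)^p\,dx,
\]
at which point $\bar\Phi\in B_p^{\ast}$ is exactly what yields the $L^p$-boundedness of $M_{\bar\Phi,\crec}$ (the $B_p^{\ast}$ characterization developed earlier in the paper, extending Pérez's $B_p$ theory from cubes to rectangles), closing the estimate. The hard part will be calibrating the Córdoba--Fefferman selection so that its $\Phi_n$-loss matches the extra $\Phi_n$ absorbed by $B_p^{\ast}$, and ensuring that condition $(A)$ furnishes the correct transfer to the $u^p$-measure.

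\emph{Part (ii).} For the necessity my plan is to argue by contradiction: assuming $\bar\Phi\notin B_p^{\ast}$, I would build on an anisotropically scaled rectangle a pair of weights $(u,v)$ with $v^{-1}$ defined in terms of $\bar\Phi$, modelled on Pérez's one-dimensional necessity argument for $B_p$, augmented by a $\log^{n-1}$ factor that arises naturally from the iterated one-dimensional averaging underlying $M_\crec$. I would then verify that~\eqref{2-weight} holds uniformly in the aspect ratio, while testing on a characteristic function of a small subrectangle drives $\|uM_\crec f\|_p/\|vf\|_p$ to infinity, the divergence being quantified precisely by the failure of $\int_c^{\nf}\Phi_n(\bar\Phi(t))\,t^{-p}\,dt/t<\nf$. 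The delicate point here will be choosing the aspect ratio so that the $\Phi_n$ factor appears explicitly from the iterated averaging, mirroring the strong-maximal analogue of Pérez's classical $B_p$ necessity argument.
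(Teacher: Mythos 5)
Your outline follows the paper's route for both parts: (i) is obtained by combining generalized H\"older with the bump condition to pass from averages of $f$ to $\|fv\|_{\bar\Phi,R}$, running a covering argument over the level sets $\{ \cm \text{ or } M_\crec f>2^k\}$ in which condition $(A)$ compensates for the absence of a Calder\'on--Zygmund decomposition, and finishing with the $L^p(\rn)$-boundedness of $M_{\bar\Phi}^\crec$, which is exactly what $\bar\Phi\in B_p^*$ delivers via Theorem \ref{t2}; the paper packages this as the $m=1$ case of the multilinear general-basis Theorem \ref{t1} plus Corollary \ref{c2}, but the substance is the same. Two corrections to the mechanism you describe are worth making. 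First, the C\'ordoba--Fefferman $\exp L^{1/(n-1)}$ covering lemma and its duality with $\Phi_n$ do not enter the weighted step at all: they live entirely inside the proof that $B_p^*$ implies $M_{\bar\Phi}^\crec\colon L^p\to L^p$, which you are already invoking as a black box, so including them again in the level-set decomposition is redundant. Second, and more importantly, condition $(A)$ does not furnish the transfer $u^p(\Omega_k\setminus\Omega_{k+1})\geq\delta\,u^p(\Omega_k)$ you propose; that reverse-doubling-in-$k$ statement is not a consequence of $(A)$ and is not how the argument closes. What $(A)$ actually gives, through Jawerth's lemma (Lemma \ref{scatteredproperty}), is a $\lambda$-scattered subfamily of the covering rectangles together with the recursion $\nu^p(\Omega_k)\leq c\,[\nu^p(\Omega_{k+\mu})+\sum_i\nu^p(\tilde A_i(l))]$, and one closes by choosing the spacing $\mu$ so large that $c\,2^{-\mu p}\leq \tfrac12$ and absorbing the first term; without this substitute your step (flagged, to your credit, as the delicate point) is a genuine gap.

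For part (ii), your contradiction construction on anisotropic rectangles would presumably work and correctly identifies where the $\log^{n-1}$ factor comes from (iterated one-dimensional integration against $1/(y_1\cdots y_n)$), but the paper's argument is shorter and requires no contradiction: for an arbitrary non-negative $g$ the pair $(u,v)=\bigl(M_{\bar\Phi}^\crec(g^{1/p})^{-1},\,g^{-1/p}\bigr)$ automatically satisfies \eqref{2-weight} with $K=1$, so the assumed boundedness yields inequality \eqref{M_phi-bdd-characterization}, and the already-established equivalence (iii)$\Leftrightarrow$(i) of Theorem \ref{t2} (whose necessity direction is precisely the $\chi_{[0,1]^n}$ computation you have in mind) gives $\bar\Phi\in B_p^*$ directly.
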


If this result is compared with the analogous one for
the Hardy-Littlewood operator (\cite[Theorem 1.5]{P3}),
then there is a key difference between them. For the former
not only do we need a more restrictive class of young functions
(the class $B_p^*$), but also it is necessary to ask for
an extra condition $(A)$ on the weight $u$. To understand the role of
this extra condition $(A)$, we should keep in mind the next relevant fact.
The study of the boundedness properties of a maximal operator with respect
to a family of bounded measurable sets is closely connected to
studying the covering properties of that family (cf. \cite{G}).
But since the geometry of rectangles in $\rn$ is much more
intricate than that of cubes in $\rn$, the classical covering
lemmas don't work in the rectangle case. Particularly, the
Calder\'on-Zygmund decomposition that  is strongly used
in the cube case cannot be used here. In this sense, the
condition $(A)$ is necessary to deal with rectangles and
with their covering properties (see Lemma \ref{scatteredproperty} below).
The problem with this condition is that, as happened with Sawyer's condition,
it involves itself the operator and therefore it would be useful
to have a replacement that did not. The $A_\nf$ condition would
be a good candidate since it is simpler than the $(A)$ condition;
however, it is also stronger (see for example \cite[p.\,1123]{P2}).
Unfortunately, repeated efforts to get a weaker condition and a
simpler covering argument have failed, but we believe that
such a result would be very interesting and would provide new
insights about the study of the strong maximal operator.

In this article, we also address similar questions
involving the multilinear version of the strong maximal
function and some other more general maximal functions.
We define the strong multilinear maximal function as
\begin{equation}\label{multilinearRectangles}
\cm_{\crec}(\vec f\,)(x):=\sup_{R\ni x}
\prod_{j=1}^m\frac1{|R|}\int_R |f_j(y)|\,dy, \hspace{0.5cm}x\in\rn
\end{equation}
where $\vec f=(f_1,\cdots,f_m)$ is an $m$-dimensional vector
of locally integrable functions and where the supremum
is taken over all rectangles with sides parallel to the
coordinate axes. This multilinear maximal operator was defined
for first time by Lerner et al. in \cite{LOPTT}  with the usual
cubes instead of rectangles. In that paper it is shown that this
operator plays a central role in the theory of multilinear
Calder\'on-Zygmund operators.
The operator \eqref{multilinearRectangles}  as well as its version
for a general basis $\cb$ (cf. section \ref{s_basis}) was introduced
and studied in \cite{GLPT}.
In particular, it was shown the weak boundedness of $\cm_{\crec}$
whenever the weights satisfy a certain power bump
variant of the multilinear version of the $A_p$ condition.
That is, for $1<p_1,\cdots, p_m<\infty$ and $0<p<\infty$
such that $\f1p=\sum_{j=1}^m \f1{p_j}$, the multilinear strong maximal function maps
\[L^{p_1}(w_1)\times\cdots\times L^{p_m}(w_m)\rightarrow L^{p,\nf}(\nu)\]
provided that $(\nu, \vec w)=(\nu, w_1,\dots, w_m)$ are
weights that satisfy the power bump condition
\begin{equation}\label{bumpedap_R}
\sup_{R\in \crec} \lf(\f 1 {|R|} \int _R \nu(x)\, dx \r)\, \prod_{j=1}^m
\left( \f 1 {|R|} \int _Rw_j^{(1-p'_j)r}\,
dx\right)^{\frac{p}{p_j'r}}<\infty\,
\end{equation}
for some $r>1$.
In the case that  $\nu= \prod_{j=1}^m w_j^{p/p_j}$
the strong boundedness of $\cm_{\crec}$ is also characterized;
see \cite[Corollary~2.4 and Theorem~2.5]{GLPT}.
Multiple weight theory that adapted to the basis $\cb=\cq$
and to its multilinear operator associated, $\cm_\cq$,
has been fully  developed by Lerner et al. \cite{LOPTT}
and generalized very recently by Moen \cite{M}.

Inspired by these previous works, we will introduce
the multilinear version of  \eqref{Ap_M} and \eqref{2-weight} for
weights $(\nu, \vec w)$ associated with general basis.
Then, the $L^{p_1}(w_1)\times\cdots\times L^{p_m}(w_m)\rightarrow L^{p}(\nu)$
boundedness of $\cm_\cb$ (cf. section \ref{s_basis}) will be proved
whenever $\nu$ is any arbitrary weight
such that $\nu^p$ satisfies condition $(A)$. This result is given in
Theorem \ref{t1}. As an application of this theorem, we will
obtain the strong version of \cite[Theorem~2.3]{GLPT}
and we will deduce the analogous result of \cite[Theorem 6.6]{M}
for the strong multilinear maximal function. See Corollary \ref{c1} and Corollary \ref{c2}.

The general organization of this paper is  as follows. Section \ref{sBp} contains
some preliminaries about Orlicz spaces and a characterization of
the strong $B_p^*$ condition. Section \ref{s_basis}
presents some definitions about general basis and the statement
of the main strong weight result for a general multilinear
operator (Theorem \ref{t1}). Also, we give the proofs of Corollaries \ref{c1}
and  \ref{c2}, and we will deduce the proof of Theorem \ref{Two-weight}
by applying Corollary \ref{c2}. Finally, the last section shows the proof of Theorem \ref{t1}.

\medskip

\noindent{\bf Acknowledgments.}
The authors are very grateful to Professor Carlos P\'erez for
suggesting the problem and for some valuable discussions on the
subject of this paper. We also wish to thank
the referee for his/her valuable corrections and comments on the paper.


\section{Characterization of the $B_p^\ast$ condition}\label{sBp}

\quad To present this characterization we need to recall a few
facts about  Orlicz spaces and we shall refer the reader to
\cite[Chapter 5]{CMP2} and \cite{RR} for complete details.
A function $\Phi:[0,\infty)\rightarrow [0,\infty)$ is a Young
function if it is continuous, convex and strictly increasing
satisfying $\Phi(0)=0$ and $\Phi(t)\rightarrow \infty$ as $t\rightarrow\infty$.
A Young function $\Phi$ is said to be doubling if there exists a positive
constant $\al$ such that
\[\Phi(2t)\leq\al\Phi(t)\]
for all $t\ge 0$. The normalized $\Phi$-norm of a function $f$ over
a set $E$ with finite measure
is defined by
\[\|f\|_{\Phi, E}:=
\inf \lf\{\la>0\,:\, \frac{1}{|E|}\int_E  \Phi \left
(\frac{|f(x)|}{\la }\right )dx\leq 1\r\}.\]
The complementary of the Young function (\ref{complementary}) has properties
\begin{eqnarray}\label{p_complementary}
\Phi^{-1}(t)\bar \Phi^{-1}(t)\sim t \quad\textup{for all}\, \, t\in(0, \nf)
\end{eqnarray}
and
\[st \le C\, \Big[ \Phi(t) +\bar \Phi(s) \Big]\]
for all $s,t\ge 0$. Also the $\bar \Phi$-norms  are related to the
$L_{\Phi}$-norms  via the  {\it the generalized H\"older's
inequality}, namely
\begin{equation}\label{Holder-Orlicz}
\frac1{|E|}\,\int_{E}|f(x)\,g(x)|\,dx \le
2\,\|f\|_{\Phi,E}\,\|g\|_{\bar \Phi, E}.
\end{equation}

Consider the Orlicz maximal operator
$$M_\Phi^\cq  f(x):=\sup_{Q\ni x, Q\in\cq}\|f\|_{\Phi, Q},$$
where the supremum is taken over all cubes containing $x$.
P\'erez \cite[Theorem~1.7]{P3} proved the following key observation: when $1<p<\nf$
and  $\Phi$ is a doubling Young function, then
$$
M_\Phi^\cq: L^p(\rn) \longrightarrow  L^p(\rn)
\quad \mbox{if and only if}\quad \Phi \,\mbox{ satisfies \eqref{Bp}}.
$$

Here we remark that the hypothesis of $\Phi$ being doubling was
only used to prove the necessity of the $B_p$ condition  but
we show now that can be removed. Indeed, if we assume that for
any non-negative function $f$ the operator $M_\Phi^\cq$ is
bounded on $L^p(\rn)$ and we take $f=\chi_{[0,1]^n}$, we have
\[\int_{\rn}M_\Phi^\cq(\chi_{[0,1]^n})(y)^pdy<\infty.\]
Now, it is easy to see that there exists a positive dimensional
constant $b$ such that whenever $|y|>1$
\[M_\Phi^\cq(\chi_{[0,1]^n})(y)=\frac{1}{\Phi^{-1}(\frac{|y|^n}{b})}.\]
Hence
\begin{eqnarray*}
\dint_{\rn}M_\Phi^\cq(\chi_{[0,1]^n})(y)^pdy
&\geq&p\int_{0}^{\infty}t^p\left|\left\{y\in\mathbb{R}^n:|y|>1,
\frac{1}{\Phi^{-1}(\frac{|y|^n}{b})}>t\right\}\right|\f{dt}{t}\\
&=&
p\int_{0}^{\infty}t^p\left|\left\{y\in\mathbb{R}^n:1<|y|<\Phi
\left(\frac{1}{t}\right)^{1/n}b^{1/n}\right\}\right|\f{dt}{t}\\
&=&
c_n p\int_{0}^{\infty}t^p
\left(b \Phi\left(\frac{1}{t}\right)-1\right)\f{dt}{t},
 \end{eqnarray*}
 where $c_n$ is a positive constant depending only on $n$.
Since $\Phi$ is increasing and $\Phi(t)\rightarrow \infty$
as $t\rightarrow\infty$, we can choose some $t_0>0$ such that for every $t\leq t_0$,
\[b \Phi\left(\frac{1}{t}\right)-1\geq \frac{b}{2}\Phi\left(\frac{1}{t}\right).\]
Then
\begin{eqnarray*}
\infty &&>c_n p\int_{0}^{\infty}t^{p-1}\left(b \Phi\left(\frac{1}{t}\right)-1\right)dt\\
&&\geq
\frac{c_npb}{2}\int_{0}^{t_0}t^{p-1}\Phi\left(\frac{1}{t}\right)dt
=\frac{c_n pb}{2}\int_{1/t_0}^{\infty}\frac{\Phi(t)}{t^p}\frac{dt}{t}.
\end{eqnarray*}

Motivated by P\'erez \cite[Theorem~1.7]{P3} and the previous observation, in this section we
consider the Orlicz maximal operator $M_\Phi^\crec $ associated with rectangles
rather than cubes.
Precisely, for each locally integrable function $f$
and Young function $\Phi$ we define the {\em Orlicz maximal operator} $M_\Phi^\crec $ by
$$M_\Phi^\crec  f(x):=  \sup_{R\ni x, R\in\crec}\|f\|_{ B, R}$$
where the supremum is taken over all rectangles with
sides parallel to the coordinate axes containing $x$.
In particular, when $\Phi(t)=t$ the maximal operator
$M_\Phi^\crec $ is exactly the classical
{\em strong maximal function} (\ref{strongM}).

The next characterization shows that the boundedness
of $M_\Phi^\crec$ is closely connected with the class $B_p^*$.

\begin{theorem}\label{t2}
Let $1<p<\nf$. Suppose that $\Phi$ is a Young function.
Then the following statements are equivalent:
\begin{enumerate}
\item[(i)] $\Phi\in B_p^{*}$;
\item[(ii)] the operator $M_\Phi^\crec $ is bounded on $L^p(\rn)$;
\item[(iii)] there exists a positive constant $C$ such that
\begin{equation}\label{M_phi-bdd-characterization}
\int_{\rn} [M_\crec (f)(y)]^p\frac{1}{[M_{\bar \Phi }^\crec(u^{1/p})(y)]^p}\,dy
\le C\int_{\rn} f(y)^p \frac1{u(y)}\,dy
\end{equation}
for all non-negative functions $f$ and $u$;
\item[(iv)]  there exists a positive constant $C$ such that
for all non-negative functions $f$ and
all $w$ satisfying the condition $(A)$,
\begin{equation}\label{w-char}
\int_{\rn} [M_\Phi^\crec (f)(y)]^pw(y)\,dy\le C\int_{\rn} f(y)^p M_\crec w(y)\,dy.
\end{equation}
\end{enumerate}
\end{theorem}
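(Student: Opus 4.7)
My plan is to establish the equivalences in the natural cycle: (i) $\Leftrightarrow$ (ii) proved directly, then (ii) $\Rightarrow$ (iii), (iv) via the generalized H\"older inequality (\ref{Holder-Orlicz}), with the loop closed by duality/testing arguments for the reverse implications.

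For (i) $\Rightarrow$ (ii), the starting observation is the pointwise inclusion
\[
\{M_\Phi^\crec f>\lambda\}\subseteq \{M_\crec[\Phi(|f|/\lambda)]>1\},
\]
immediate from the Luxemburg-norm definition. Combined with the Jessen--Marcinkiewicz--Zygmund weak-type bound
\[
|\{M_\crec g>1\}|\le C\int_{\rn}\Phi_n(|g|)\,dx,
\]
which is the classical $L(\log L)^{n-1}\to L^{1,\infty}$ endpoint for $M_\crec$ and is the origin of $\Phi_n$ in (\ref{Strong-Bp}), this yields $|\{M_\Phi^\crec f>\lambda\}|\le C\int\Phi_n(\Phi(|f|/\lambda))\,dx$. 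A layer-cake decomposition, Fubini, and the substitution $t=|f(x)|/\lambda$ then produce
\[
\int_{\rn}[M_\Phi^\crec f]^p\,dx\le C\|f\|_p^p\int_{1/2}^\infty\frac{\Phi_n(\Phi(t))}{t^p}\frac{dt}{t},
\]
which is finite exactly under the $B_p^*$ hypothesis. The main technical obstacle is that the naive substitution leaves the $t$-integral singular near $t=0$; to remove it I would first truncate $f=f\chi_{\{|f|>\lambda/2\}}+f\chi_{\{|f|\le\lambda/2\}}$ and use the convexity of $\Phi$ (so the small part contributes at most $\Phi(1/2)<1$ on average, after a normalization), which restricts $\lambda$ to $(0,2|f(x)|)$ and therefore $t$ to $[1/2,\infty)$, precisely the region governed by (\ref{Strong-Bp}).

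For (ii) $\Rightarrow$ (i), I would imitate the test-function computation already carried out in Section \ref{sBp} for $M_\Phi^\cq$: take $f=\chi_{[0,1]^n}$, and check that for $y$ with every $y_i>1$ the optimal rectangle is $R=[0,y_1]\times\cdots\times[0,y_n]$, giving $M_\Phi^\crec\chi_{[0,1]^n}(y)\sim 1/\Phi^{-1}(y_1\cdots y_n)$. The substitution $y_i=e^{u_i}$ collapses $\prod y_i$ to a single variable $t$ and introduces the characteristic rectangle-geometry factor $(\log t)^{n-1}/(n-1)!$, so that (ii) forces
\[
\int_1^\infty\frac{(\log t)^{n-1}}{[\Phi^{-1}(t)]^p}\,dt<\infty,
\]
and the final change of variable $t=\Phi(s)$ (after a doubling reduction on $\Phi$ if necessary) rearranges this into (\ref{Strong-Bp}).

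The implications (ii) $\Rightarrow$ (iii) and (ii) $\Rightarrow$ (iv) both exploit (\ref{Holder-Orlicz}). For (iii), I would factor $f=u^{1/p}\cdot(fu^{-1/p})$ and apply H\"older on every rectangle $R\ni y$ to obtain the pointwise bound
\[
\frac{M_\crec f(y)}{M_{\bar\Phi}^\crec u^{1/p}(y)}\le 2\,M_\Phi^\crec(fu^{-1/p})(y),
\]
from which (\ref{M_phi-bdd-characterization}) follows by raising to the $p$-th power and invoking (ii) applied to $fu^{-1/p}$. The same factorization combined with condition $(A)$ on $w$---which is the rectangle-geometry substitute for the Calder\'on--Zygmund decomposition (cf.\ Lemma \ref{scatteredproperty})---yields (\ref{w-char}). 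The reverse implications (iii) $\Rightarrow$ (ii) and (iv) $\Rightarrow$ (ii) are more delicate and do not follow from the trivial choice $u\equiv 1$ or $w\equiv 1$ (which would recover only the classical $L^p$ bound for $M_\crec$); my plan is to linearize $M_\Phi^\crec g$ via the Orlicz duality $\|g\|_{\Phi,R}\sim\sup\{|R|^{-1}\int_R gh:\|h\|_{\bar\Phi,R}\le 1\}$, make a measurable selection of rectangles together with extremizers $h_R$, and then choose $u$ (respectively $w$) matching these extremizers so that conditions (iii) or (iv) collapse to the unweighted $L^p$-boundedness of $M_\Phi^\crec$, closing the cycle back to (i).
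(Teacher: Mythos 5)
Your (i) $\Rightarrow$ (ii) is essentially the paper's argument: the inclusion $\{M_\Phi^\crec f>\lambda\}\subseteq\{M_\crec[\Phi(|f|/\lambda)]>1\}$ combined with the $L(\log L)^{n-1}\to L^{1,\infty}$ endpoint for $M_\crec$ is exactly what the paper derives by hand from the C\'ordoba--Fefferman covering lemma and the Young-type inequality $ab\le C_\theta(e^{\theta a}-1+\Phi_n(b))$, and your truncation $f=f\chi_{\{|f|>\lambda/2\}}+f\chi_{\{|f|\le\lambda/2\}}$ is the paper's splitting $f=f_t+f^t$. Likewise (ii) $\Rightarrow$ (iii) via H\"older with $h=fu^{-1/p}$ and $g=u^{1/p}$ is identical. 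Your direct (ii) $\Rightarrow$ (i) by testing on $f=\chi_{[0,1]^n}$ is a legitimate variant of the paper's route (the paper tests (iii) instead of (ii)); note, however, that $\Phi$ is not assumed doubling, so you cannot lean on a ``doubling reduction'' for the substitution $t=\Phi(s)$ --- use the convexity inequality $\Phi'(s)\ge\Phi(s)/s$, as the paper does, to pass from $\Phi'(s)\,ds$ to $\Phi(s)\,ds/s$.

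The remaining three steps have real problems. First, (iv) $\Rightarrow$ (ii) \emph{is} the trivial choice $w\equiv1$: then $M_\crec w\equiv 1$, the left-hand side of \eqref{w-char} already contains $M_\Phi^\crec$ (not $M_\crec$), and Lebesgue measure satisfies $(A)$ by the Jessen--Marcinkiewicz--Zygmund estimate applied to $\chi_E$; your claim that $w\equiv 1$ only recovers the classical bound for $M_\crec$ is a misreading, and the linearization you propose there is unnecessary. Second, for (iii) $\Rightarrow$ (ii) your extremizer/measurable-selection plan is not carried out and faces a genuine obstruction: you would need a single function $u$ for which $M_{\bar\Phi}^\crec(u^{1/p})(x)$ is simultaneously comparable, at every $x$, to the extremizer norm on the rectangle witnessing $M_\Phi^\crec g(x)$, and nothing in your sketch produces it. The paper sidesteps this by proving (iii) $\Rightarrow$ (i) instead: plug $f=\chi_{[0,1]^n}$ and $u_N=\chi_{[0,1]^n}+N^{-1}\chi_{\rn\setminus[0,1]^n}$ into \eqref{M_phi-bdd-characterization}, let $N\to\infty$, compute $M_\crec(\chi_{[0,1]^n})(y)=(y_1\cdots y_n)^{-1}$ and $M_{\bar\Phi}^\crec(\chi_{[0,1]^n})(y)=1/\bar\Phi^{-1}(y_1\cdots y_n)$ for $y_j>1$, use $\Phi^{-1}(t)\bar\Phi^{-1}(t)\sim t$, and iterate the one-variable integrations to generate the factor $(\log\Phi(z))^{n-1}$; you should adopt this route. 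Third, and most seriously, (ii) $\Rightarrow$ (iv) cannot be obtained from ``the same factorization combined with condition $(A)$'': a pointwise H\"older bound gives nothing here, because the rectangles realizing $M_\Phi^\crec f$ overlap uncontrollably. The paper's proof (Remark \ref{r_t2}) runs the full machinery of Theorem \ref{t1}: decompose the level sets $\{M_\Phi^\crec f>2^k\}$, distribute the witnessing rectangles into $\mu$ interlaced sequences, extract $\lambda$-scattered subfamilies via Lemma \ref{scatteredproperty} (this is precisely where condition $(A)$ for $w$ enters), replace $|\tilde A_i(l)|$ by the measures of the disjoint sets $E_i(l)$, absorb $(w(\tilde A_i(l))/|\tilde A_i(l)|)^{1/p}\le (M_\crec w)^{1/p}$ into the Luxemburg norm, and only then apply the unweighted $L^p$ bound from (ii) over the disjoint family. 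Without some such covering argument this implication remains unproven.
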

As particular examples of Young functions $\Phi\in B_p^*$,
one can easily check that a Young function $\Phi$ satisfies
 the condition (\ref{Strong-Bp}) if
$$\Phi(t)\sim t^\alpha \log^{-\beta} (e+t)
\qquad  -\infty<\alpha<p,\, \beta\in\mathbf R;$$
$$ \Phi(t)\sim t^{p} \log^{-\beta} (e+t)
\qquad\qquad \beta>n;$$
or the weaker one
$$\Phi(t)\sim t^{p} \log^{-n} (e+t)\, [\log\log(e+t)]^{-\gamma}
\qquad \gamma>1.$$

\begin{proof}[Proof of Theorem \ref{t2}]
We assume that (i) holds and show (ii). To this end, for each $t>0$,
we split the function $f$ into $f=f_t+f^t$
with
$f_t:= f\chi_{|f|>t/2}$ and $f^t:= f\chi_{|f|\le t/2}.$
Then, $$M_\Phi^\crec  f\le M_\Phi^\crec (f_t)+M_\Phi^\crec (f^t)\le M_\Phi^\crec (f_t)+t/2$$
and
$$\{x\in\rn:\, M_\Phi^\crec  f(x)>t\}\subset \{x\in\rn:\, M_\Phi^\crec  (f_t)(x)>t/2\}.$$
Set
$$\Omega_t:= \{x\in\rn:\, M_\Phi^\crec  (f_t)(x)>t/2\}.$$
 Choose a compact set $K\subset\Omega_t$ such that
$|\Omega_t|/2\le|K|\le|\Omega_t|$. There exists a sequence of rectangles $\{R_j\}_{j=1}^N$
such that $K \subset\cup_{j=1}^N R_j$ and
$$\|f_t\|_{\Phi, R_j}>t\qquad\forall\, j\in\{1,\cdots,N\}.$$
By \cite[Lemma~6.1]{GLPT}, the condition $\|f_t\|_{\Phi, R_j}>t$ implies that
$$1<\norm{\f{f_t}{t}}{\Phi, R_j}\le \f1{|R_j|}\int_{R_j} \Phi\lf(\f{|f_t(y)|}{t}\r)\,dy.$$
Applying now the covering lemma from \cite{cf} (see also
\cite[Theorem~4.1 (C)]{Bagby}), there are dimensional
positive constants $\de,c$ and a subfamily
$\{\widetilde R_j\}_{j=1}^{\ell}$ of $\{R_{j}\}_{j=1}^N$ satisfying
$$\bigg|\bigcup_{j=1}^N R_{j}\bigg|\leq c\,
\bigg|\bigcup_{j=1}^\ell \widetilde R_j\bigg|,$$
and
$$
\int_{\bigcup_{j=1}^\ell \widetilde R_j} \exp
\bigg(\de\,\sum_{j=1}^\ell \chi_{\widetilde
R_j}(x)\bigg)^{\f1{n-1}}\,dx \le 2\bigg|\bigcup_{j=1}^\ell
\widetilde R_j\bigg|.
$$
Let $\widetilde E:= \bigcup_{j=1}^\ell \widetilde R_j$.
Recall that for each $\theta>0$, there exists a constant
 $C_\theta$ such that for all
$a, b\ge0$,
$$ab\le C_\theta(e^{\theta a}-1+b[1+(\log_+b)^{n-1}])
=C_\theta(e^{\theta a}-1+ \Phi_n(b));$$
see \cite[p.\,887]{Bagby}. Then, for all $\ez>0$,
\begin{eqnarray*}
|\widetilde E|
&&\leq \,\sum_{j=1}^\ell |\widetilde R_j|\\
&&\le  \sum_{j=1}^\ell \int_{\widetilde R_j} \Phi\lf(\f{|f_t(y)|}{t}\r)\,dy\\
&&=\int_{\bigcup_{j=1}^\ell \widetilde R_j}
\sum_{j=1}^\ell \chi_{\widetilde R_j}(y)\Phi\lf(\f{|f_t(y)|}{t}\r)\,dy\\
&&\le \ez C_\de \int_{\bigcup_{j=1}^\ell \widetilde R_j}
\lf[\exp\lf(\de\sum_{j=1}^\ell \chi_{\widetilde R_j}(y)\r)-1
+  \Phi_n\lf(\f1\ez\Phi\lf(\f{|f_t(y)|}{t}\r)\r)\r]\,dy\\
&&\le \ez C_\de\lf\{|\widetilde E|+ \Phi_n(1/\ez)
\int_{\widetilde E} \Phi_n\lf(\Phi\lf(\f{|f_t(y)|}{t}\r)\r)\,dy\r\}.
\end{eqnarray*}
Choosing $\ez>0$ small enough we obtain
$$|\widetilde E|\le  C \int_{\widetilde E}
\Phi_n\lf(\Phi\lf(\f{|f_t(y)|}{t}\r)\r)\,dy.$$
Since $|\Omega_t|\sim|K|$ and $|K|\le c |\widetilde E|$
and $ \Phi_n(\Phi(0))=0$, it follows that
\begin{eqnarray*}
|\Omega_t|
&&\le C \int_{\widetilde E}
\Phi_n\lf(\Phi\lf(\f{|f_t(y)|}{t}\r)\r)\,dy
\le C \int_{\{y\in\rn: |f(y)|>t/2\}}
\Phi_n\lf(\Phi\lf(\f{|f(y)|}{t}\r)\r)\,dy.
\end{eqnarray*}
This inequality  and the fact
$\{x\in\rn:\, M_\Phi^\crec  f(x)>t\}\subset\Omega_t$,
together with the change of variable $s=|f(y)|/t$, yields
\begin{eqnarray*}
\|M_\Phi^\crec  f\|_{L^p(\rn)}^p
&&=p\dint_0^\nf t^{p}|\{x\in\rn:\, M_\Phi^\crec  f(x)>t\}|\,\f{dt}{t}\\
&&\le p\int_0^\nf t^{p}|\Omega_t|\f{dt}{t}\\
&& \le C \int_0^\nf \int_{\{y\in\rn:|f(y)|>t/2\}} t^{p}
\Phi_n\lf(\Phi\lf(\f{|f(y)|}{t}\r)\r)\,dy\,\f{dt}{t}\\
&&= C \int_{\rn}\int_0^{2|f(y)|} t^{p}
\Phi_n\lf(\Phi\lf(\f{|f(y)|}{t}\r)\r)\,\f{dt}{t}\,dy\\
&&\le C \int_{\rn}\int_{1/2}^\nf |f(y)|^p
\f{ \Phi_n(\Phi(s))}{s^{p}}\,\f{ds}{s}\,dy\\
&&\le C\|f\|_{L^p(\rn)}^p,
\end{eqnarray*}
where in the last step we use the hypothesis $\Phi\in B_p^\ast$.
This proves (ii).

Let us assume that (ii) holds. Using the
generalized H\"older's inequality \eqref{Holder-Orlicz}
we obtain
$$M_\crec (hg)(y)\le M_\Phi^\crec  h(y) M_{\bar \Phi }^\crec g(y),$$
which together with the boundedness of $M_\Phi^\crec$ on $L^p(\rn)$
implies that
\begin{eqnarray*}
\int_{\rn} [M_\crec (hg)(y)]^p\frac{1}{[M_{\bar \Phi }^\crec g(y)]^p}\,dy
&&\le \int_{\rn} [M_\Phi^\crec  h(y)]^p\,dy \\
&&\le \|M_\Phi^\crec \|_{L^p(\rn)\to L^p(\rn)}^p \int_{\rn} h(y)^p\,dy,
\end{eqnarray*}
from which we obtain the claim (iii) by taking $h= fu^{-1/p}$ and $g=u^{1/p}$.

To prove that (iii) implies (i), for any $N\in\nn$,
 we let $f:= \chi_{[0, 1]^n}$
and $u_N:= \chi_{[0, 1]^n}+\frac{\chi_{\rn\setminus [0,1]^n}}{N}$
in \eqref{M_phi-bdd-characterization}. Hence we get
\begin{eqnarray*}
\int_{\rn} \lf(\frac{M_\crec  (\chi_{[0, 1]^n})(y)}
{M_{\bar \Phi }^\crec (\chi_{[0, 1]^n}
+\frac{\chi_{\rn\setminus [0,1]^n}}{N})(y)}\r)^p\,dy\le C.
\end{eqnarray*}
Observing that $ M_{\bar \Phi }^\crec (f+g)
\le M_{\bar \Phi }^\crec f+M_{\bar \Phi }^\crec g$
and using the monotone convergence lemma,
we deduce
\begin{eqnarray}\label{e1}
\int_{\rn} \lf(\frac{M_\crec  (\chi_{[0, 1]^n})(y)}{M_{\bar \Phi }^\crec
(\chi_{[0, 1]^n})(y)}\r)^p\,dy\le C.
\end{eqnarray}
It is easy to see that for any point $(y_1, \cdots, y_n)\in\rn$
such that $y_j>1$ for all $j\in\{1,\cdots,n\}$,
we have
$$M_\crec  (\chi_{[0, 1]^n})(y)=\sup_{R\ni y, R\in\crec} \frac{|R\cap[0, 1]^n|}{|R|}
=\frac 1 {y_1y_2\cdots y_n},$$
and
\begin{eqnarray*}
M_{\bar \Phi }^\crec (\chi_{[0, 1]^n})(y)
&&=\sup_{R\ni y, R\in\crec} \inf\lf\{\la>0:\,
 \bar \Phi \lf(\la^{-1}\r)\le \frac{|R|}{|R\cap[0, 1]^n|}\r\}\\
&&=\sup_{R\ni y, R\in\crec} \frac1{ \bar \Phi ^{-1}
\lf(\frac{|R|}{|R\cap[0, 1]^n|}\r)}\\
&&=\frac1{\bar \Phi ^{-1}\lf(y_1y_2\cdots y_n\r)}.
\end{eqnarray*}
Inserting these two estimates into \eqref{e1}
and using \eqref{p_complementary},  we deduce that
\begin{eqnarray*}
\nf&&> \int_1^\nf\cdots\int_1^\nf
\lf(\frac{\bar \Phi ^{-1}\lf(y_1y_2\cdots y_n\r)}
{y_1y_2\cdots y_n}\r)^p\,dy_n\,\cdots dy_1\\
&&\sim \int_1^\nf\cdots\int_1^\nf
\lf(\frac1{\Phi^{-1}\lf(y_1y_2\cdots y_n\r)}\r)^p\,dy_n\,\cdots dy_1.
\end{eqnarray*}
Then it follows that
\begin{eqnarray*}
\int_1^\nf
\lf(\frac1{\Phi^{-1}\lf(y_1y_2\cdots y_n\r)}\r)^p\,dy_n
&&= \frac{1}{y_1\cdots y_{n-1}} \int_{\Phi^{-1}(y_1\cdots y_{n-1})}^\nf
\frac {\Phi'(z)}{z^p}\,dz\\
&&\geq\frac{1}{y_1\cdots y_{n-1}} \int_{\Phi^{-1}(y_1\cdots y_{n-1})}^\nf
\frac {\Phi(z)}{z^{p+1}}\,dz,
\end{eqnarray*}
where we have used the fact that $\Phi'(t)\geq\f{\Phi(t)}{t}$
for any Young function $\Phi$. Now we take the integral in the
variable $y_{n-1}$ and we obtain
\begin{eqnarray*}
&&\int_1^\nf \int_1^\nf
\lf(\frac1{\Phi^{-1}\lf(y_1y_2\cdots y_n\r)}\r)^p\,dy_n\,dy_{n-1}\\
&&\hs\geq\int_1^\nf \frac{1}{y_1\cdots y_{n-1}}
\int_{\Phi^{-1}(y_1\cdots y_{n-1})}^\nf \frac {\Phi(z)}{z^{p+1}}\,dz \,dy_{n-1}\\
&&\hs= \int_{\Phi^{-1}(y_1\cdots y_{n-2})}^\nf \int_1^{\f{\Phi(z)}{y_1\cdots y_{n-2}}}
\frac{1}{y_1\cdots y_{n-1}} \,dy_{n-1} \frac {\Phi(z)}{z^{p+1}}\,dz\\
&&\hs= \frac{1}{y_1\cdots y_{n-2}} \int_{\Phi^{-1}(y_1\cdots y_{n-2})}^\nf
\ln\lf(\f{\Phi(z)}{y_1\cdots y_{n-2}}\r)
\frac {\Phi(z)}{z^{p+1}}\,dz.
\end{eqnarray*}
Moreover,
\begin{eqnarray*}
&&\int_1^\nf \frac{1}{y_1\cdots y_{n-2}}
\int_{\Phi^{-1}(y_1\cdots y_{n-2})}^\nf \ln\lf(\f{\Phi(z)}{y_1\cdots y_{n-2}}\r)
\frac {\Phi(z)}{z^{p+1}}\,dz\,dy_{n-2}\\
&&\hs= \frac{1}{y_1\cdots y_{n-3}}
\int_{\Phi^{-1}(y_1\cdots y_{n-3)}}^\nf
\int_1^{\f{\Phi(z)}{y_1\cdots y_{n-3}}} \f1{y_{n-2}}
\ln\lf(\f{\Phi(z)}{y_1\cdots y_{n-2}}\r)\,dy_{n-2}
\frac {\Phi(z)}{z^{p+1}}\,dz\\
&&\hs= \frac{1}{y_1\cdots y_{n-3}}
\int_{\Phi^{-1}(y_1\cdots y_{n-3)}}^\nf \lf(\ln\lf(\f{\Phi(z)}{y_1\cdots y_{n-3}}\r)\r)^2
\frac {\Phi(z)}{z^{p+1}}\,dz.
\end{eqnarray*}
We iterate this process by integrating over the next
variables $y_{n-3},\cdots, y_1$ in turn
and we obtain
\begin{eqnarray*}
\nf&&> \int_1^\nf\cdots\int_1^\nf
\lf(\frac1{\Phi^{-1}\lf(y_1y_2\cdots y_n\r)}\r)^p\,dy_n\,\cdots dy_1\\
&&\geq\int_{\Phi^{-1}(1)}^\nf \lf(\ln\lf(\Phi(z)\r)\r)^{n-1}\frac {\Phi(z)}{z^{p+1}}\,dz\\
&&\geq\int_{\Phi^{-1}(e)}^\nf \f{ \Phi_n(\Phi(z))}{z^{p+1}}\,dz,
\end{eqnarray*}
which proves (i).

To conclude the proof of this theorem, note that (iv) implies (ii)
by choosing $w=1$ in the right side of \eqref{w-char}.
In order to prove that (ii) implies (iv) we will
proceed using an argument very similar to
the one presented in the proof of Theorem \ref{t1}.
For this reason, we will give the details to complete this
proof in the fourth section, Remark \ref{r_t2}.
 Here we  point out that the proof of Theorem \ref{t1} below is independent
of Theorem \ref{t2}.
\end{proof}

It should be remarked that the classical  $B_p$ condition \eqref{Bp}
is not sufficient
for the $L^p(\rn)$-boundedness of $M_{\Phi}^\crec$. For this,
we consider for example the function
$f=\chi_{[0,1]^n},$
and
\[\Phi(t) = \frac{t^p}{(\mathrm{log} (1 + t))^{1 + \delta}}
\qquad \textup{for all}\,\, t\in(0,\infty)\]
with $0<\delta<1$.
It is easy to verify that such a function $\Phi$ satisfies \eqref{Bp} but
fails for \eqref{Strong-Bp}.
For simplicity, we consider only the case when $n=2$.
If $|x_i|> >1$ (we can take $|x_i|>4$) for $i=1, 2$, then
\[M_\Phi^\crec f(x_1,x_2)=\frac{1}{\Phi^{-1}(|x_1||x_2|)}
\sim \frac1{{|x_1||x_2|}^{1/p}(\mathrm{log}
(1 + |x_1||x_2|))^{(1 + \delta)/p}},\]
by terms of the fact that $\Phi^{-1}(t)
\sim t^{1/p}(\mathrm{log} (1 + t))^{(1 + \delta)/p}$
for all $t\in(0,\infty)$.
Using Fubini's theorem, we have
\begin{eqnarray*}
\int_{\mathbf{R}^2}\left( M_\Phi^\crec f(x) \right)^p \,dx
& \geq &   \int_{4}^{\infty}\int_{4}^{\infty}
\left(\frac{1}{\Phi^{-1}(|x_1||x_2|)}\right)^p dx_2dx_1\\
& \geq&
  \int_{4}^{\infty}\int_{4}^{\infty}\frac{1}{x_1x_2\left(\mathrm{log}
  \left(1 +x_1x_2\right)\right)^{1 + \delta}} dx_2dx_1\\
& \geq &
  \int_{4}^{\infty}\int_{4}^{\infty}\frac{1}{\left(1+x_1x_2\right)
  \left(\mathrm{log} \left(1 + x_1x_2\right)\right)^{1 + \delta}} dx_2dx_1\\
& \sim &
  \frac{1}{\delta}\int_{4}^{\infty}\frac{1}{x_1(\mathrm{log} (1 + 4x_1))^{\delta}}dx_1\\
& \geq &
  \frac{1}{\delta}\int_{16}^{\infty}\frac{1}{(1+x_1)
  (\mathrm{log} (1 + x_1))^{\delta}}dx_1=\infty.
 \end{eqnarray*}
However,
$$\|f\|_{L^p(\mathbf R^2)}=\|\chi_{[0,1]^2}\|_{L^p(\mathbf R^2)}=1.$$
Hence, $M_\Phi^\crec$ is not bounded on $L^p(\mathbf R^2)$.
The general case for $n>2$ is similar and we omit the details.

Though the  $B_p$ condition \eqref{Bp} is not sufficient
for the $L^p(\rn)$-boundedness of $M_{\Phi}^\crec$, we can
remedy this situation if we restrict to those Young functions
that are submultiplicative; see Proposition \ref{p4.2} below.
We say that a Young function $\Phi$ is submultiplicative if for each $t,s> 0$,
\[\Phi(ts)\leq\Phi(t)\Phi(s).\]

\begin{proposition}\label{p4.2}
Let $1<p<\nf$. Assume that $\Phi$ is a submultiplicative
Young function such that $\Phi\in B_p$. Then the operator
$M_\Phi^\crec $ is bounded on $L^p(\rn)$.
\end{proposition}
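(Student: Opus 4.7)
The plan is to reduce to the one-dimensional Orlicz maximal function by iteration in each coordinate. For $i\in\{1,\ldots,n\}$ let $M_\Phi^{(i)}$ denote the one-dimensional Orlicz maximal operator acting only in the $i$-th variable,
\[M_\Phi^{(i)} f(x):=\sup_{I\ni x_i}\,\|f(x_1,\ldots,x_{i-1},\cdot,x_{i+1},\ldots,x_n)\|_{\Phi,I},\]
where the supremum runs over bounded intervals $I\subset\rrr$ containing $x_i$. The heart of the argument is the pointwise domination
\[M_\Phi^\crec f(x)\ \le\ M_\Phi^{(1)}\circ M_\Phi^{(2)}\circ\cdots\circ M_\Phi^{(n)} f(x),\qquad x\in\rn.\]
Once this pointwise bound is in hand, the conclusion is immediate: submultiplicativity forces $\Phi(2t)\le \Phi(2)\Phi(t)$, so $\Phi$ is automatically doubling; P\'erez's one-dimensional $B_p$ theorem recalled at the start of this section (case of intervals in $\rrr$), combined with Fubini, gives the $L^p(\rn)$-boundedness of each $M_\Phi^{(i)}$; composing $n$ such bounded operators then yields the desired $L^p(\rn)$-boundedness of $M_\Phi^\crec$.

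To prove the pointwise bound it suffices by induction on $n$ to treat the two-variable step, i.e.\ to show $\|f\|_{\Phi,I_1\times I_2}\le \sigma$ for any rectangle $R=I_1\times I_2$ containing $x=(x_1,x_2)$, where
\[h(y_1):=\|f(y_1,\cdot)\|_{\Phi,I_2},\qquad \sigma:=\|h\|_{\Phi,I_1}\]
(note that $\sigma\le M_\Phi^{(1)}(M_\Phi^{(2)}f)(x)$ by monotonicity of the Luxemburg norm). On $\{y_1\in I_1:h(y_1)>0\}$ submultiplicativity delivers
\[\Phi\!\lf(\f{|f(y_1,y_2)|}{\sigma}\r)=\Phi\!\lf(\f{|f(y_1,y_2)|}{h(y_1)}\cdot\f{h(y_1)}{\sigma}\r)\le \Phi\!\lf(\f{|f(y_1,y_2)|}{h(y_1)}\r)\Phi\!\lf(\f{h(y_1)}{\sigma}\r),\]
while on its complement one sees (via Fatou applied to the defining condition of the Luxemburg norm) that $f(y_1,\cdot)$ vanishes a.e.\ on $I_2$ and therefore contributes nothing. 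Averaging first in $y_2$ over $I_2$ and invoking the definition of $h(y_1)$ absorbs the first factor into $1$; averaging then in $y_1$ over $I_1$ and invoking the definition of $\sigma$ absorbs the second factor into $1$. The outcome is $\f1{|R|}\int_R\Phi(|f|/\sigma)\,dy\le 1$, hence $\|f\|_{\Phi,R}\le\sigma$ as required.

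The only real obstacle is the pointwise iteration bound above; once the submultiplicative splitting $\Phi(ab)\le \Phi(a)\Phi(b)$ is inserted in the right place in the Luxemburg definition, the rest is straightforward bookkeeping. This calculation also clarifies the contrast with Theorem \ref{t2}: for a general Young function the inner $y_2$-average controls $|f|/h$ only after being smeared by the factor $\Phi_n(t)=t[\log(e+t)]^{n-1}$, which is precisely what forces the stronger $\bar\Phi\in B_p^\ast$ in the general case. Submultiplicativity removes this logarithmic loss and returns us to the ordinary $B_p$ condition.
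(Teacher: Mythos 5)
Your proof is correct, and it takes a genuinely different route from the paper's. The paper never touches the operator directly: it checks that a submultiplicative $\Phi\in B_p$ automatically belongs to the stronger class $B_p^{*}$ --- using the self-improvement $B_p\Rightarrow B_{p-\ez}$ for submultiplicative Young functions to absorb the extra factor $[\log^+\Phi(s)]^{n-1}\lesssim\Phi(s)^{\delta(n-1)}\lesssim s^{\ez}$ --- and then invokes the equivalence (i)$\Leftrightarrow$(ii) of Theorem \ref{t2}, hence ultimately the C\'ordoba--Fefferman covering lemma. You instead prove the pointwise iteration bound $M_\Phi^\crec f\le M_\Phi^{(1)}\circ\cdots\circ M_\Phi^{(n)}f$, where submultiplicativity is exactly what lets the Luxemburg norm factor through the slicewise norms; your two-variable computation (the ``norm of the norm'' argument) is sound, including the treatment of the degenerate set where $h=0$, and the reduction to the one-dimensional case of P\'erez's $B_p$ theorem via Fubini is legitimate since the $B_p$ condition is dimension-free. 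Your route is more elementary (no covering lemma, no appeal to Theorem \ref{t2}) and yields the explicit bound $\|M_\Phi^\crec\|_{L^p\to L^p}\le\|M_\Phi\|_{L^p(\rrr)\to L^p(\rrr)}^n$; it also explains conceptually why submultiplicativity removes the logarithmic loss $\Phi_n$. What it does not record, and the paper's route does as a byproduct, is the purely function-theoretic fact that submultiplicativity upgrades $B_p$ to $B_p^{*}$. Two small points deserve an explicit sentence in a final write-up: the ``absorption into $1$'' steps use that the Luxemburg infimum is attained, i.e. $\frac{1}{|E|}\int_E\Phi(|g|/\|g\|_{\Phi,E})\,dx\le 1$ whenever the norm is positive (Fatou plus continuity of $\Phi$); and the inequality $\sigma\le M_\Phi^{(1)}\big(M_\Phi^{(2)}f\big)(x)$ uses that $x_2\in I_2$, so that $M_\Phi^{(2)}f(\cdot,x_2)\ge h$ pointwise on $I_1$ before the outer Luxemburg norm is taken. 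Both are standard but should be said.
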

\begin{proof}
This is a simple consequence of the fact that for a  submultiplicative
Young function such that $\Phi\in B_p$, there exits $\ez>0$
for which $\Phi\in B_{p-\ez}$(\cite[Lemma 4.3]{P3}). Indeed,
using the previous theorem we only need to prove that
$\Phi\in\Phi_p^*$. Note that
\begin{equation}\label{2.6}
\dint_{c}^\nf \f{ \Phi_n(\Phi(s))}{s^{p}}\f{ds}{s}
=\dint_{c}^\nf \f{ \Phi(s)}{s^{p}}\f{ds}{s}
+\dint_{c}^\nf \f{ \Phi(s)}{s^{p-\ez}}
\f{(\log^+\Phi(s))^{n-1}}{s^\ez}\f{ds}{s}.
\end{equation}
It is clear that the first term in the right hand  of
\eqref{2.6} is bounded. On the other hand,
\[\f{(\log^+\Phi(s))^{n-1}}{s^\ez}
\leq\f{\Phi(s)^{\de(n-1)}}{s^\ez\de^{(n-1)}},\]
with $\de>0$. Since $\Phi$ is  in the class $B_p$,
it follows that $\Phi(t)\ls t^p$ for $t\geq 1$ and hence
 for $\de=\f{\ez}{p(n-1)}$ the above term is bounded.
This further implies that the second term of \eqref{2.6} is bounded by
a constant multiple of
$$\int_{c}^\nf \f{ \Phi(s)}{s^{p-\ez}}\f{ds}{s},$$
which together with the aforementioned fact that  $\Phi\in B_{p-\ez}$
gives the boundedness of the second term of \eqref{2.6}.
\end{proof}

\begin{remark}\rm
We observe that a typical Young function that belongs
to the class $B_p$ and that it is also submultiplicative
is  $\Phi(t)=t^r$ with $1\leq r<p$. Another more
interesting example is the function $\Phi$ given by
$\Phi(t)=t^r(1+\log_+t)^{\al}$  with $1\leq r<p$ and $\al>0$.
It is not difficult to see that such functions are
submultiplicative and they are
 in the $B_p$ class. See Cruz-Uribe and Fiorenza \cite{CF}
 for related discussions on the topic of
submultiplicative  Young functions.
\end{remark}


\section{Weighted theory for general
 bases and proof of Theorem \ref{Two-weight}}\label{s_basis}
We start by introducing some notation that we will use
through this section. By a {\it basis} $\cb$ in
$\rn$ we mean a collection of open sets in $\rn$.
The most important examples of bases arise by taking
$\cb= \cq$ the family of all open cubes in $\rn$ with sides parallel
to the axes, $\cb= \cd$ the family of all open dyadic cubes in
$\rn$,  and   $\cb = \crec$ the family of all open rectangles in
$\rn$ with sides parallel to the axes.   Another interesting example
is the set
 $\Re$ of all  rectangles in $\rr^3$ with
sides parallel to the coordinate axes whose side lengths
are $s$, $t$, and $st$, for some $t,s>0$.

Assume that $\cb$ is a basis and that $\{\Psi_j\}_{j=1}^m$
is a sequence of Young functions, we define the multi(sub)linear
Orlicz maximal function by
$$\cm_{ \overrightarrow \Psi}^\cb(\vec f\,)(x)
:=\sup_{B\in\cb, B\ni x}\prod_{j=1}^m\|f_j\|_{\Psi_j, B}.$$
In particular, when $\Psi_j(t)= t$ for all $t\in(0, \infty)$ and all $j\in\{1,\cdots,m\}$, we simply write
$\cm_{ \overrightarrow \Psi}^\cb$ as $\cm_\cb$; that is,
$$\cm_{\cb}(\vec f\,)(x)=\sup_{B\in\cb, B\ni x}\prod_{j=1}^m\frac1{|B|}\int_B |f_j(y)|\,dy.$$
When $m=1$, we use $M_\Psi^{\cb}$ and $M_{\cb}$ to respectively denote
$\cm_{ \overrightarrow \Psi}^\cb$ and $\cm_{\cb}$.

We  say that $w$
is a {\it weight} associated with the basis $\mathcal B$ if $w$ is a
non-negative measurable function in $\rn$ such that $w (B) =
\int_{B} w(y)\, dy < \infty$ for each $B$ in $\mathcal B$.
A weight $w$  associated with $\cb$ is said to satisfy the
$A_{p,\cb}$ condition, $1<p<\infty$, if
\[\sup_{B\in \cb} \left(\f 1 {|B|} \int _B  w \, dx \right)\,  \left( \f 1 {|B|}
\int _Bw^{1-p'}\, dx\right)^{\frac{p}{p'}} <\infty\ .\]
In the limiting case $p = 1$ we say that $w$ satisfies the $A_{1, \mathcal B}$
if
$$
\left( \frac{1}{ |B| } \int_{B} w(y)\, dy \right) \mathop\mathrm{esssup}_{B}\, w^{-1} \le c
$$
for all $B \in \mathcal B$; this is equivalent to
$
 M_{{\mathcal B}}w(x) \le c\, w (x)
$
for almost all $x \in \rn$. It follows from these definitions and
H\"older's inequality that
$
A_{p, \mathcal B} \subset   A_{q, \mathcal B}
$
if $1 \le p \le q \le \infty$.  Then it is natural to define the
class $A_{\infty,\cb}$ by setting

$$
A_{\infty,\cb}:= \bigcup_{p>1} A_{p,\cb}.
$$

For a general basis $\cb$ we obtain the following strong  type result.
\begin{theorem}\label{t1}
Let $1<p_1,\cdots, p_m<\infty$ and $0<p<\infty$ such that $\f1p=\sum_{j=1}^m \f1{p_j}$.
Assume that $\cb$ is a basis and that $\{\Psi_j\}_{j=1}^m$
is a sequence of Young functions
such that
$$M_{ \overrightarrow{\bar\Psi}}^\cb(\vec f)(x)
:=\sup_{B\in\cb, B\ni x}\prod_{j=1}^m\|f_j\|_{\bar\Psi_j, B}$$
is bounded from $L^{p_1}(\rn)\times\cdots\times L^{p_m}(\rn)$ to $L^p(\rn)$.
Let $(\nu, \vec w\,)=(\nu, w_1,\cdots, w_m)$ be weights such
that $\nu^p$ satisfies condition $(A)$, and that
\begin{equation}\label{m+1-weight}
\sup_{B\in\cb}\lf(\f1{|B|}\int_B \nu(x)^p\,dx\r)^{1/p}
\prod_{j=1}^m \|w_j^{-1}\|_{\Psi_j, B}<\nf.
\end{equation}
Then $\cm_\cb $ is bounded from $L^{p_1}(w_1^{p_1})
\times\cdots\times L^{p_m}(w_m^{p_m})$ to $L^p(\nu^p)$.
\end{theorem}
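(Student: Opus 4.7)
My plan is to reduce the required weighted multilinear estimate for $\cm_\cb$ to the hypothesized unweighted multilinear bound for $M^{\cb}_{\overrightarrow{\bar\Psi}}$, using a pointwise local H\"older bound, a principal-rectangle stopping time adapted to the basis $\cb$, and a sparseness telescoping driven by Lemma~\ref{scatteredproperty}. Throughout set $g_l:=f_l w_l$ and $\mu:=\nu^p$. The starting point is the generalized H\"older inequality \eqref{Holder-Orlicz} in each factor: for every $B\in\cb$ and each $l$,
\begin{equation*}
\tfrac{1}{|B|}\int_B |f_l|\,dy \le 2\,\|g_l\|_{\bar\Psi_l,B}\,\|w_l^{-1}\|_{\Psi_l,B}.
\end{equation*}
Taking the product over $l=1,\dots,m$ and absorbing $\prod_l\|w_l^{-1}\|_{\Psi_l,B}\le K(\mu(B)/|B|)^{-1/p}$ through hypothesis \eqref{m+1-weight}, then raising to the $p$th power, I obtain the key local estimate
\begin{equation*}
\mu(B)\,\Big(\prod_{l=1}^m \tfrac{1}{|B|}\int_B |f_l|\Big)^{p} \le C\,|B|\prod_{l=1}^m\|g_l\|_{\bar\Psi_l,B}^{p} \le C\int_B\!\big(M^{\cb}_{\overrightarrow{\bar\Psi}}(\vec g)\big)^{p}\,dx,
\end{equation*}
the last step using $M^{\cb}_{\overrightarrow{\bar\Psi}}(\vec g)(y)\ge\prod_l\|g_l\|_{\bar\Psi_l,B}$ for all $y\in B$.

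I would then discretize at dyadic levels $\alpha_k=2^k$, setting $\Omega_k:=\{\cm_\cb(\vec f)>2^k\}$, so that $\int(\cm_\cb\vec f)^p\,d\mu\lesssim\sum_k 2^{kp}\mu(\Omega_k\setminus\Omega_{k+1})$. A direct weak-type bound coming from the local estimate and a scattered subcover of $\Omega_k$ would cost a factor of $\log\cm_\cb\vec f$ upon summing in $k$, so instead one must build a principal/stopping-time family $\mathcal F\subset\cb$: start with the maximal members of $\cb$ on which $\prod_l|R|^{-1}\int_R|f_l|$ exceeds a fixed initial threshold and iterate by selecting maximal proper sub-elements of $\cb$ on which this averaged product has doubled. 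Writing $a_R:=\prod_l|R|^{-1}\int_R|f_l|$ and $E_R$ for the exclusive piece of $R$ (the subset of $R$ disjoint from every strictly smaller member of $\mathcal F$ chosen at a later stage), the stopping rule forces $\cm_\cb(\vec f)\lesssim a_R$ on $E_R$, which gives
\begin{equation*}
\int_{\rn}(\cm_\cb\vec f)^p\,d\mu \lesssim \sum_{R\in\mathcal F} a_R^{p}\,\mu(E_R).
\end{equation*}

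The endgame requires two complementary sparseness properties of $\mathcal F$: (i)~$|E_R|\gtrsim|R|$ (Lebesgue-sparseness), which a Cordoba--Fefferman--Bagby style covering inequality of the sort recalled in section~\ref{sBp} should provide, and (ii)~$\mu(E_R)\gtrsim\mu(R)$ ($\mu$-sparseness), which is what Lemma~\ref{scatteredproperty} delivers from condition $(A)$ on $\mu$. With both in hand, the local estimate of paragraph one gives $a_R^{p}\mu(R)\lesssim\int_R(M^{\cb}_{\overrightarrow{\bar\Psi}}\vec g)^p$, and Lebesgue-sparseness lets one replace $\int_R$ by $\int_{E_R}$ at the price of an absolute constant; since $\{E_R\}_{R\in\mathcal F}$ are pairwise disjoint,
\begin{equation*}
\sum_{R\in\mathcal F} a_R^{p}\,\mu(E_R) \lesssim \sum_{R\in\mathcal F}\int_{E_R}\!\big(M^{\cb}_{\overrightarrow{\bar\Psi}}\vec g\big)^p\,dx \le \int_{\rn}\!\big(M^{\cb}_{\overrightarrow{\bar\Psi}}\vec g\big)^p\,dx,
\end{equation*}
and the hypothesized $L^{p_1}\times\cdots\times L^{p_m}\to L^p$ boundedness of $M^{\cb}_{\overrightarrow{\bar\Psi}}$ (applied to $\vec g$) closes the proof.

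The step I expect to be the principal obstacle is the simultaneous enforcement of sparseness in both Lebesgue measure and $\mu$ for a general basis $\cb$. In the cube case both come for free from the weak-$(1,1)$ of the Hardy--Littlewood maximal operator, but for rectangles or a general $\cb$ no such weak-$(1,1)$ is available, so the Calder\'on--Zygmund decomposition underlying the classical principal-cube construction cannot be invoked. Condition $(A)$ is exactly what supplies the $\mu$-side of sparseness via Lemma~\ref{scatteredproperty}; coordinating it with a separate rectangular covering estimate for the Lebesgue side, without destroying the geometric factor $2^{kp}$ in the stopping-time telescoping, is the technical heart of the argument.
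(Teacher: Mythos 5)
Your local estimate is the right one and is exactly the paper's: generalized H\"older \eqref{Holder-Orlicz} plus the bump condition \eqref{m+1-weight} converts $\nu^p(B)\,a_B^p$ (with $a_B=\prod_j|B|^{-1}\int_B|f_j|$) into $C|B|\prod_j\|f_jw_j\|_{\bar\Psi_j,B}^p$, which is then dominated by $\int_{E_B}(\cm_{\ovec{\bar\Psi}}^\cb(\vec g))^p$ once the exclusive piece $E_B$ has full Lebesgue proportion in $B$. The gap is in the global combinatorics, and it is the one you yourself flag at the end as unresolved. For a general basis $\cb$ (arbitrary open sets; already for rectangles) there is no notion of ``maximal members'' or ``maximal proper sub-elements,'' so the principal family $\mathcal F$ cannot be constructed, and the key claim that the stopping rule forces $\cm_\cb(\vec f)\lesssim a_R$ on $E_R$ --- which for cubes follows from maximality of the selected cubes via the Calder\'on--Zygmund structure --- has no proof here. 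You also misread Lemma \ref{scatteredproperty}: it does not deliver $\nu^p(E_R)\gtrsim\nu^p(R)$ for exclusive pieces. What it delivers is (a) a subsequence that is $\lambda$-scattered with respect to \emph{Lebesgue} measure, and (c) a telescoping inequality for the weight of \emph{unions}, $w(\cup_{s<j}A_s)\le c\,[\,w(\cup_{s<i}A_s)+w(\cup_{i\le s<j}\tilde A_s)\,]$. Both the Lebesgue-side and the weight-side information come from this single lemma; no separate C\'ordoba--Fefferman covering input is needed (nor would one coordinate with a stopping family).

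The paper's proof dispenses with the stopping time altogether. It truncates to $2^{-N}<\cm_\cb(\vec f)\le 2^{N+1}$, covers a compact exhaustion of each level set $\Omega_k=\{\cm_\cb(\vec f)>2^k\}$ by a finite family $b_k\subset\cb$ of sets with $a_B>2^k$, distributes $\bigcup_k b_k$ into $\mu$ sequences according to the residue of $k$ modulo a large integer $\mu$, and applies Lemma \ref{scatteredproperty} to each sequence. Part (c) yields $\nu^p(\Omega_k)\le c\,\nu^p(\Omega_{k+\mu})+c\sum_i\nu^p(\tilde A_i(l))$; multiplying by $2^{kp}$ and summing over $k$, the first term contributes $c\,2^{-p\mu}\sum_k2^{kp}\nu^p(\Omega_k)$, which is absorbed into the left-hand side by choosing $\mu$ with $c\,2^{-p\mu}\le\frac12$ (the truncation guarantees finiteness, which the absorption requires). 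Only at that point does the local estimate enter, applied to the scattered sets $\tilde A_i(l)$, whose exclusive pieces satisfy $|\tilde A_i(l)|\le(1-\lambda)^{-1}|E_i(l)|$ by part (a) and are pairwise disjoint. If you wish to retain a sparse/stopping-time architecture, you must supply, for a general $\cb$, both the construction of $\mathcal F$ and the pointwise domination on $E_R$; as written, the proposal does not establish the theorem.
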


\begin{remark}\rm\lab{r_t1}
We observe that for all $x\in\rn$ and for all non-negative functions
$\vec f=(f_1,\dots, f_m)$,
$$\cm_{\ovec{\bar\Psi}}^\cb(\vec f)(x)\le\prod_{j=1}^m M_{   \bar \Psi_j }^\cb(f_j)(x).$$
Thus, if we assume that each $M_{\bar\Psi_j}^\cb$ is bounded on $L^{p_j}(\rn)$, then
$\cm_{\ovec{\bar\Psi}}^\cb$ is bounded from
$L^{p_1}(\rn)\times\cdots\times L^{p_m}(\rn)$ to $L^p(\rn)$,
and consequently, the conclusion of Theorem \ref{t1} gives us that
$\cm_\cb$ is bounded from
$L^{p_1}(w_1^{p_1})\times\cdots\times L^{p_m}(w_m^{p_m})$ to $L^p(\nu^p)$
when $(\nu, \vec w)$ satisfies \eqref{m+1-weight}.
\end{remark}

Between these general bases, we are particularly
interested in Muckenhoupt basis introduced in \cite{P1}.
We say that $\cb$ is a {\it Muckenhoupt basis} if for any $1<p<\infty$
and for any $w \in A_{p,\cb}$, $M_\cb$ is bounded in $L^p(w)$.
Most of the important bases are in this class and, in particular,
those mentioned above: $\cq, \cd$, $\crec$.  The fact that $\crec$
is a Muckenhoupt basis can be found in \cite{GCRdF}. The basis $\Re$
is also a Muckenhoupt basis as shown by R. Fefferman \cite{RF2}.

For Muckenhoupt bases, the generalization of the power bump
condition (\ref{bumpedap_R}) assures the boundedness
of $M_{ \overrightarrow{\bar\Psi}}^\cb$. Therefore we can deduce the following result.

\begin{corollary} \label{c1}
Let $\cb$ be a Muckenhoupt  basis.
Let $ \frac1m<p<\infty$ and $1<p_1,\dots,p_m<\infty$ such
that $\f1 p=\f 1 {p_1} + \cdots +\f 1{p_m}$.
If the weights $(\nu, \vec w)= (\nu, w_1,\cdots, w_m)$
satisfy the power bump condition
\begin{equation}\label{bumpedap}
\sup_{B\in \cb} \lf(\f 1 {|B|} \int _B \nu(x)\, dx \r)\, \prod_{j=1}^m
\left( \f 1 {|B|} \int _Bw_j^{(1-p'_j)r}\,
dx\right)^{\frac{p}{p_j'r}}<\infty\,
\end{equation}
for some $r>1$ and
$\nu$ satisfies condition $(A)$,
then
$\cm_\cb$ is bounded from $L^{p_1}(w_1)
\times \cdots \times L^{p_m}(w_m)$ to $L^{p}(\nu).$
\end{corollary}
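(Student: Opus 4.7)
The plan is to deduce Corollary \ref{c1} directly from Theorem \ref{t1} by choosing the power Young functions $\Psi_j(t):=t^{p_j'r}$ for $j=1,\dots,m$. For this choice the Luxemburg norms collapse to power averages,
$$\|f\|_{\Psi_j,B}=\left(\frac{1}{|B|}\int_B|f|^{p_j'r}\,dx\right)^{1/(p_j'r)},$$
and the complementary Young function is pointwise equivalent to $t^{(p_j'r)'}$. Using $p_j'(p_j-1)=p_j$ one checks that $(p_j'r)'<p_j$ holds precisely when $r>1$, which is exactly the hypothesis of the corollary.

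Next I would verify the $L^{p_j}(\rn)$-boundedness of each $M_{\bar\Psi_j}^\cb$. Since $\cb$ is a Muckenhoupt basis and the trivial weight $w\equiv 1$ lies in $A_{s,\cb}$ for every $s>1$, the operator $M_\cb$ is bounded on $L^s(\rn)$ for every $s>1$. Combined with the pointwise estimate
$$M_{\bar\Psi_j}^\cb f(x)\lesssim \bigl(M_\cb(|f|^{(p_j'r)'})(x)\bigr)^{1/(p_j'r)'},$$
applied with $s=p_j/(p_j'r)'>1$, this yields the desired $L^{p_j}(\rn)$-boundedness. Remark \ref{r_t1} then gives the boundedness of $\cm_{\overrightarrow{\bar\Psi}}^\cb$ from $L^{p_1}(\rn)\times\cdots\times L^{p_m}(\rn)$ to $L^p(\rn)$, which is the analytic input required by Theorem \ref{t1}.

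To align the two formulations, I would pass to the auxiliary weights $\widetilde\nu:=\nu^{1/p}$ and $\widetilde w_j:=w_j^{1/p_j}$. Then $L^{p_j}(\widetilde w_j^{p_j})=L^{p_j}(w_j)$, $L^p(\widetilde\nu^{\,p})=L^p(\nu)$, and the condition $(A)$ on $\widetilde\nu^{\,p}=\nu$ is exactly the hypothesis of the corollary. Using the identity $-p_j'r/p_j=(1-p_j')r$ one computes
$$\|\widetilde w_j^{\,-1}\|_{\Psi_j,B}=\left(\frac{1}{|B|}\int_B w_j^{(1-p_j')r}\,dx\right)^{1/(p_j'r)},$$
so the hypothesis \eqref{m+1-weight} of Theorem \ref{t1} for $(\widetilde\nu,\widetilde{\vec w}\,)$ is precisely the $1/p$-th power of the power bump condition \eqref{bumpedap}. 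Theorem \ref{t1} then delivers the conclusion. The argument is mostly algebraic bookkeeping; the only genuinely analytic point is the $L^{p_j}(\rn)$-boundedness of $M_{\bar\Psi_j}^\cb$, which hinges crucially on the strict inequality $r>1$ so that $(p_j'r)'<p_j$.
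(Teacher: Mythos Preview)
Your proof is correct and follows essentially the same route as the paper: choose the power Young functions $\Psi_j(t)=t^{p_j'r}$, pass to the auxiliary weights $\widetilde\nu=\nu^{1/p}$ and $\widetilde w_j=w_j^{1/p_j}$, verify that the power bump condition becomes \eqref{m+1-weight}, use the Muckenhoupt basis hypothesis together with $(p_j'r)'<p_j$ to get the $L^{p_j}$-boundedness of $M_{\bar\Psi_j}^\cb$, and then invoke Remark~\ref{r_t1} and Theorem~\ref{t1}. The only differences are that you supply a few extra details (the explicit check that $(p_j'r)'<p_j$ is equivalent to $r>1$, and the pointwise domination by $M_\cb$ to justify the boundedness), which the paper leaves implicit.
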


\begin{proof}
For each $j\in\{1,\cdots, m\}$, we set $\widetilde w_j:= w_j^{1/p_j}$ and
$\Psi_j(t):= t^{p_j'r}$ for all $t\in(0, \infty)$.
Set $\widetilde \nu:=\nu^{1/p}$. Then the power bump condition
\eqref{bumpedap} can be rewritten as
$$\sup_{B\in\cb}\lf\{\frac{1}{|B|}\int_B \widetilde\nu^p\,dx\r\}^{1/p}
\prod_{j=1}^m\|\widetilde w_j^{-1}\|_{\Psi_j, B}<\infty.$$
In this case, for all $x\in\rn$,
$$M_{\bar\Psi_j}^{\cb}f(x)=\sup_{B\in\cb, B\ni x}\|f\|_{\bar\Psi_j, B}
=\sup_{B\in\cb, B\ni x}\lf\{\frac1{|B|}\int_B |f(y)|^{(p_j'r)'}\,dy\r\}^{1/(p_j'r)'}.$$
Since $\cb$ is a Muckenhoupt  basis  and $(p_j'r)'<p_j$,
every $M_{\bar\Psi_j}^{\cb}$ is bounded on
$L^{p_j}(\rn)$. By Remark \ref{r_t1} this implies that
$\cm_{\ovec{\bar\Psi}}^{\cb}$ is bounded from
$L^{p_1}(\rn)\times \cdots \times L^{p_m}(\rn)$ to $L^{p}(\rn)$.
Thus, by Theorem \ref{t1}
$$\cm_\cb: \,  L^{p_1}(\widetilde w_1^{p_1})
\times \cdots \times L^{p_m}(\widetilde w_m^{p_m}) \to L^{p}(\widetilde\nu^p),$$
which completes the proof.
\end{proof}

A result stronger than Corollary \ref{c1} is the following boundedness of the
multilinear strong maximal function, where $(\nu, \vec w)$
satisfy some logarithmic type condition.

\begin{corollary}\lab{c2}
Let $1<p_1,\cdots, p_m<\infty$ and $ \frac1m <p<\infty$ such that $\f1p=\sum_{j=1}^m \f1{p_j}$.
Let $(\nu, \vec w\,)= (\nu, w_1,\cdots, w_m)$
such that $\nu$ and all the $w_j$'s are weights,
and $\nu^p$ satisfies condition (A).
If there is a positive constant $K$ such that for all rectangles $R$,
\[\lf(\f1{|R|}\int_R \nu(x)^p\,dx\r)^{1/p} \prod_{j=1}^m \|w_j^{-1}\|_{\Psi_j, R}<\nf,\]
where every $\Psi_j$ is a Young function
such that $\bar\Psi_j\in B_{p_j}^\ast$. Then $\cm_\crec $
is bounded from $L^{p_1}(w_1^{p_1})\times\cdots\times L^{p_m}(w_m^{p_m})$ to $L^p(\nu^p)$.
\end{corollary}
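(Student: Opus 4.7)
The plan is to deduce Corollary \ref{c2} as a direct consequence of Theorem \ref{t1} applied to the basis $\cb = \crec$, using Theorem \ref{t2} to verify the boundedness hypothesis on the auxiliary multilinear Orlicz maximal operator.

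First, I would observe that the bump condition
\[\lf(\f1{|R|}\int_R \nu(x)^p\,dx\r)^{1/p} \prod_{j=1}^m \|w_j^{-1}\|_{\Psi_j, R}<\nf\]
assumed in Corollary \ref{c2} is literally condition \eqref{m+1-weight} of Theorem \ref{t1} specialized to the rectangle basis $\crec$. The hypothesis that $\nu^p$ satisfies condition $(A)$ is also exactly what Theorem \ref{t1} requires. Therefore the only remaining thing to check is the boundedness hypothesis
\[M_{\overrightarrow{\bar\Psi}}^{\crec}:\,L^{p_1}(\rn)\times\cdots\times L^{p_m}(\rn)\longrightarrow L^p(\rn).\]

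To verify this, I would invoke Remark \ref{r_t1}, which reduces the boundedness of $\cm_{\overrightarrow{\bar\Psi}}^{\crec}$ to the boundedness of each single-variable operator $M_{\bar\Psi_j}^{\crec}$ on $L^{p_j}(\rn)$ (the product of these operators dominates $\cm_{\overrightarrow{\bar\Psi}}^{\crec}$, so H\"older's inequality gives the claim, together with the relation $\tfrac1p = \sum_j \tfrac1{p_j}$). Now, the assumption in Corollary \ref{c2} is precisely that $\bar\Psi_j \in B_{p_j}^{\ast}$ for each $j$, and by the equivalence (i)$\Leftrightarrow$(ii) in Theorem \ref{t2}, this is equivalent to $M_{\bar\Psi_j}^{\crec}$ being bounded on $L^{p_j}(\rn)$.

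Putting the pieces together, the boundedness hypothesis in Theorem \ref{t1} is verified for $\cb = \crec$ and the chosen Young functions, and therefore Theorem \ref{t1} applies and yields
\[\cm_\crec:\,L^{p_1}(w_1^{p_1})\times\cdots\times L^{p_m}(w_m^{p_m})\longrightarrow L^p(\nu^p),\]
which is the conclusion of Corollary \ref{c2}. Since each step is an immediate application of previously stated results, I do not foresee a significant obstacle; the only bookkeeping point is to keep track of the pairing $\Psi_j \leftrightarrow \bar\Psi_j$ (the bumps in the weight condition are measured with $\Psi_j$, while the auxiliary Orlicz maximal operator uses the complementary functions $\bar\Psi_j$), so that the $B_{p_j}^{\ast}$ hypothesis lines up correctly with Theorem \ref{t2}.
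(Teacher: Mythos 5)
Your proposal is correct and follows exactly the paper's own argument: Theorem \ref{t2} (i)$\Leftrightarrow$(ii) gives the $L^{p_j}(\rn)$-boundedness of each $M_{\bar\Psi_j}^{\crec}$ from $\bar\Psi_j\in B_{p_j}^\ast$, Remark \ref{r_t1} upgrades this to the multilinear Orlicz maximal operator, and Theorem \ref{t1} with $\cb=\crec$ then yields the conclusion. Nothing is missing.
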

\begin{proof}
From  Theorem \ref{t2} and the assumption that each $\bar\Psi_j$ is a Young function
satisfying the condition \eqref{Strong-Bp}, it follows that
every $M_{ \bar\Psi_j}^\crec$ is bounded on $L^{p_j}(\rn)$.
Then, applying Remark \ref{r_t1} and Theorem \ref{t1} with $\cb=\crec$,
we obtain the desired conclusion.
\end{proof}

We end this section with the proof of Theorem \ref{Two-weight}
that is a straight consequence of  Corollary \ref{c2}.

\begin{proof}[Proof of Theorem \ref{Two-weight}]
We notice first that (i) is the linear case ($m=1$) of Corollary \ref{c2}.
For the proof of (ii) we proceed as in \cite[Proposition 3.2]{P3}.
That is, consider any non-negative function $g$ and define the couple
of weights $(u,v)=(M_\Phi^\crec(g^{1/p})^{-1},g^{-1/p})$.
 Obviously, $(u, v)$ satisfies condition \eqref{2-weight} with
 constant $K=1$. Hence, by hypothesis there is a constant $C$ such that
\[\int_{\rn} [M_\crec (f)(y)]^p\frac{1}{[M_{\bar \Phi }^\crec(g^{1/p})(y)]^p}\,dy
\le C\int_{\rn} f(y)^p \frac1{g(y)}\,dy.\]
Finally, by Theorem \ref{t2}, this inequality implies that $\Phi\in B_p^*$,
which completes the proof.
\end{proof}


\section{Proof of the strong type estimate in the $(m+1)$-weight case}
To prove Theorem \ref{t1}, we use an argument that
combines ideas from \cite[Theorem~2.5]{GLPT},
the second proof of Theorem 3.7 in \cite{LOPTT}, and some other
tools from \cite{J} and \cite{P3}.
First, we will recall an additional definition for general
bases and a special case of
a lemma from \cite{J} that we will need for the proof of Theorem \ref{t1}.
The following definition concerns the concept of {\it
$\alpha$-scattered families}, which was considered in the works
\cite{JT} and \cite{J} and implicitly in \cite{C} and \cite{cf}.

\begin{definition}\label{d3.1}
Let $\cb$ be a basis and $0<\alpha<1$.  A finite sequence
$\{\tilde{A_i}\}_{i=1}^{M} \subset\cb$ of sets of finite
Lebesgue measure is called $\alpha$--scattered with respect to the
Lebesgue  measure if for all $1<i\leq M$,
$$
 \bigg|\tilde{A_i}\bigcap \bigcup_{s<i}\tilde{A_s}\bigg|\leq \alpha |\tilde{A_i}|.
$$
\end{definition}

The proof of the following lemma
is in \cite[p.\,370, Lemma~1.5]{J}; see also \cite{GLPT}.
\begin{lemma}\label{scatteredproperty}
Let $\cb$ be a basis and let $w$ be a weight associated to this
basis. Suppose further that $w$ satisfies condition $(A)$   for some
$0<\la<1$  and $0<c(\la)<\infty$. Then  given any finite
sequence $\{A_i\}_{i=1}^{M}$ of sets  $A_i\in \cb$,
\begin{enumerate}
\item[(a)] there exists a subsequence $\{\tilde{A_i}\}_{i\in I}$ of
$\{A_i\}_{i=1}^{M}$ which is  $\la$-scattered with respect to the Lebesgue measure;
\item[(b)] $\tilde{A_i}= A_i, \,  i\in I$;
\item[(c)] for any $1\leq i<j\leq M+1$,
\begin{equation*}
w\Big(\bigcup_{s<j}A_s\Big)\leq   c(\la)
\,\Big[w\Big(\bigcup_{s<i}A_s\Big) + w\Big(\bigcup_{i\leq
s<j}\tilde{A_{s}}\Big)   \Big],
\end{equation*}
where $\tilde{A_{s}}=\emptyset$ when $s\notin I$.
\end{enumerate}
\end{lemma}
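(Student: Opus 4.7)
My plan is a greedy selection carried out in the natural left-to-right order. I would scan $i = 1, 2, \ldots, M$ while maintaining the running union $U_i := \bigcup_{s \in I,\, s<i} A_s$ (initially empty), and place $i$ into $I$ precisely when $|A_i \cap U_i| \le \la |A_i|$; otherwise skip. Then set $\tilde{A_i} := A_i$ for $i \in I$ and $\tilde{A_i} := \emptyset$ otherwise. Property (b) holds by construction, and property (a) is immediate: if $i \in I$, then $|\tilde{A_i} \cap \bigcup_{s \in I,\, s<i} \tilde{A_s}| = |A_i \cap U_i| \le \la |A_i|$ is exactly the selection criterion.

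The substance of the argument is (c), and the key observation is that the selection criterion is a maximal-function statement in disguise: if $s \notin I$, then $|A_s \cap U_s| > \la |A_s|$, so for every $x \in A_s$ the basis element $A_s$ itself witnesses $M_{\cb}(\chi_{U_s})(x) > \la$, giving
\[
A_s \subset \{x \in \rn : M_{\cb}(\chi_{U_s})(x) > \la\}.
\]
Condition $(A)$ then controls the $w$-measure of such level sets by $w(U_s)$.

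To derive (c), I would fix $1 \le i < j \le M+1$, set $E_k := \bigcup_{s<k} A_s$ and $F := \bigcup_{i \le s < j} \tilde{A_s}$, and decompose
\[
E_j = E_i \cup F \cup \bigcup_{\substack{i \le s < j\\ s \notin I}} A_s.
\]
For any skipped index $s \in [i,j)$, the selected indices $t < s$ split into those less than $i$ (contained in $E_i$) and those in $[i,s)$ (contained in $F$), so $U_s \subset E_i \cup F$, and hence
\[
\bigcup_{\substack{i \le s < j\\ s \notin I}} A_s \subset \{M_{\cb}(\chi_{E_i \cup F}) > \la\}.
\]
Applying condition $(A)$ to $E_i \cup F$ bounds this piece by $c(\la)[w(E_i) + w(F)]$; combined with the trivial bound $w(E_i \cup F) \le w(E_i) + w(F)$, this yields (c) after absorbing an additive $1$ into the constant.

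The step I expect to be the main subtlety, rather than a technical obstacle, is the choice of selection order: the strict left-to-right greedy sweep is precisely what forces $U_s \subset E_i \cup F$ for every skipped $s \in [i,j)$ and for any cutoff $i$. A non-monotone or rearranged selection would lose control of which selected sets precede a given skipped $A_s$, and the inductive form of (c), which must hold for all $i < j$ simultaneously, would break down.
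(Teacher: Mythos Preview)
Your argument is correct and is essentially the standard greedy selection argument due to Jawerth, which is exactly what the paper invokes: it does not prove the lemma at all but simply cites \cite[p.\,370, Lemma~1.5]{J} (see the sentence immediately preceding the lemma). One cosmetic refinement: as written you end with the constant $1+c(\la)$ rather than $c(\la)$; to recover the exact constant, just note that $E_i\cup F$ is itself a union of basis sets each fully contained in $E_i\cup F$, so $E_i\cup F\subset\{M_{\cb}(\chi_{E_i\cup F})>\la\}$ as well, and hence the whole of $E_j$ lies in that level set, giving $w(E_j)\le c(\la)\,w(E_i\cup F)\le c(\la)[w(E_i)+w(F)]$ directly from condition~$(A)$.
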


\begin{proof}[Proof of Theorem \ref{t1}]
Let $N>0$ be a   large  integer.  We  will prove the required
estimate for the quantity
\begin{equation*}
\int_{2^{-N}<\cm_{\cb}(\vec f\,)\leq 2^{N+1}} \cm_{\cb}(\vec f\,)(x)^{p}\, \nu(x)^p\, dx
\end{equation*}
with a bound independent of  $N$.
 We claim that for each integer $k$ with $|k|\leq N$,
there exist a compact set $K_k$ and a finite sequence $b_k=\{B_\al^k\}_{\al\geq 1}$
of sets $B_\al^k \in \cb$ such that
$$
\nu^p(K_k) \leq \nu^p(\{\cm_{\cb}(\vec f\,)>2^k \})\leq 2 \, \nu^p(K_k)
$$
The sequence of sets $\{\cup_{B\in b_k} B\}_{k=-N}^N$ is decreasing. Moreover,
$$\bigcup_{B\in b_k} B\subset K_k \subset  \{\cm_{\cb}(\vec f\,)>2^k \},$$
and
\begin{equation}\label{bigger2^k}
 \prod_{j=1}^m   \f 1 {|B_\al^k|} \int _{B_\al^k} |f_j (y)| \, dy >2^k,
 \qqq \al\geq 1,
\end{equation}
To see the claim, for each $k$ we choose a compact set
$ \widetilde K_k\subset \{\cm_{\cb}(\vec f\,)>2^k \} $
such that
$$\nu^p(\widetilde K_k)\leq \nu^p(\{\cm_{\cb}(\vec f\,)>2^k \})\leq 2 \,
\nu^p(\widetilde K_k).$$
For this $\widetilde K_k$, there exists
a finite sequence $b_k=\{B_\al^k\}_{\al\geq 1}$
of sets $B_\al^k \in \cb$ such that every $B_\al^k$ satisfies
\eqref{bigger2^k} and such that
$\widetilde K_{k} \subset \cup_{B\in b_k} B\subset \{\cm_{\cb}(\vec f\,)>2^k \}.$
Now, we take a compact set $K_k$ such that $\cup_{B\in b_k} B\subset K_k
\subset \{\cm_{\cb}(\vec f\,)>2^k \}$.
Finally, to ensure that  $\{\cup_{B\in b_k} B\}_{k=-N}^N$ is decreasing,
we begin the above selection
from $k=N$ and once a selection is done for $k$ we do the selection
for $k-1$ with the next additional requirement
$$\widetilde K_{k-1} \supset K_k.$$
This proves the claim. Since $\{\cup_{B\in b_k} B\}_{k=-N}^N$
is a sequence of decreasing sets, we set
\begin{equation*}
\Omega_k=
\begin{cases}
\bigcup_{\al}B_\al^k =\bigcup_{B\in b_k} B  &\textup{when }\q |k|\leq N,
\\
\emptyset  &\textup{when }\q |k|>N.
\end{cases}
\end{equation*}
Observe that these sets are decreasing in $k$, i.e.,
$\Omega_{k+1}\subset  \Omega_k$ when $-N<k\le N$.

We now distribute the sets in $\bigcup_k b_k$ over $\mu$ sequences
$\{A_i(l)\}_{i\geq 1}$, $0\leq l\leq \mu-1 $, where $\mu$ will be
chosen momentarily to be an appropriately large natural number. Set
$i_0(0)=1$. In the first $i_1(0)-i_0(0)$ entries
of $\{A_i(0)\}_{i\geq 1}$, i.e., for
$$
 i_{0}(0)\leq i<i_{1}(0),
$$
we place the    elements of the sequence $b_N=\{B_\al^N\}_{\al\geq 1}$
  in the order indicated by the index $\al$. For the next
  $i_{2}(0)-i_{1}(0)$ entries of $\{A_i(0)\}_{i\geq 1}$, i.e., for
$$
 i_{1}(0)\leq i<i_{2}(0),
$$
we place   the elements of the sequence $b_{N-\mu}$. We continue in this way until
we  reach the first integer $m_0$ such that $N-m_0\mu\geq -N$, when
we stop. For indices $i$  satisfying
$$
i_{m_0}(0)\leq i<i_{m_0+1}(0),
$$
we place  in the sequence   $\{A_i(0)\}_{i\geq 1}$ the    elements of
$b_{N-m_0\mu}$. The sequences
$\{A_i(l)\}_{i\geq 1}$, $1\leq l\leq \mu-1,$ are defined similarly,
starting from $b_{N-l}$ and using the families
$b_{N-l-s\mu}$, $s=0,1,\cdots, m_l$,  where $m_l$ is chosen to be the
biggest integer such that $N-l-m_l\mu\geq -N$.

 Since $\nu^p$ is a weight associated to $\cb$ and it satisfies condition (A),
 we can apply Lemma \ref{scatteredproperty} to each $\{A_i(l)\}_{i\geq 1}$ for some
fixed $0<\la<1$. Then we obtain sequences
$$
\{\tilde{A}_i(l)\}_{i\geq 1} \subset \{A_i(l)\}_{i\geq 1} \, , \qqq 0\leq
l\leq \mu-1,
$$
which are $\la$-scattered with respect to the
Lebesgue measure. In view of the definition of the set $\Omega_k$ and
the construction of the families $\{A_i(l)\}_{i\geq 1}$, we can use
assertion (c) of Lemma \ref{scatteredproperty} to obtain that
for any $k=N-l-s\mu$ with $0\le l\le\mu-1$ and $1\le s\le m_l$,
\begin{eqnarray*}
\nu^p (\Omega_k)=\mu^p(\Omega_{N-l-s\mu}) &&\leq  c\Bigg[ \nu^p (\Omega_{k+\mu}) +
\nu^p \lf( \bigcup_{ i_s(l)\leq i<i_{s+1}(l)
}\tilde{A}_i(l)\r) \Bigg]\\
&& \leq c\, \nu^p (\Omega_{k+\mu}) +
c\! \sum_{i=i_{s}(l)}^{i_{s+1}(l)-1} \nu^p (\tilde{A}_i(l)).
\end{eqnarray*}
For the case $s=0$, we have $k=N-l$ and
\begin{eqnarray*}
\nu^p (\Omega_k)=\nu^p(\Omega_{N-l}) && \leq
c\! \sum_{i=i_0(l)}^{i_1(l)-1} \nu^p (\tilde{A}_i(l)).
\end{eqnarray*}
Now, all these sets $\{\tilde{A}_i(l)\}_{i=i_s(l)}^{i_{s+1}(l)-1}$
belong to $b_k$ with  $k= N-l-s\mu $  and therefore
\begin{eqnarray}\label{ee4}
\prod_{\al=1}^m   \f 1 {|\tilde{A}_i(l)|} \int _{\tilde{A}_i(l)}
|f_j (x)| \, dx >2^k.
\end{eqnarray}
 It now readily follows that
$$
\int_{2^{-N}<\cm_{\cb}(\vec f\,)\leq 2^{N+1}} \cm_{\cb}(\vec f\,)(x)^{p}\, \nu^p (x)\, dx
\leq 2^{p} \sum_{k=-N}^{N-1}2^{kp} \nu^p (\Omega_k)
$$
and then
\begin{eqnarray}\label{ee3}
\sum_{k=-N}^{N-1}2^{kp} \nu^p(\Omega_k)
&&=\sum_{\ell=0}^{\mu-1} \sum_{0\le s\le m_\ell} 2^{p(N-l-s\mu)} \nu^p(\Omega_{N-l-s\mu})\\
&&= c \sum_{\ell=0}^{\mu-1}\sum_{1\le s\le m_\ell} 2^{p(N-l-s\mu)} \nu^p(\Omega_{N-l-s\mu+\mu})\nonumber\\
&&\quad+c \sum_{\ell=0}^{\mu-1}\sum_{0\le s\le m_\ell} 2^{p(N-l-s\mu)}
\sum_{i=i_{s}(l)}^{i_{s+1}(l)-1} \nu^p (\tilde{A}_i(l)).\nonumber
\end{eqnarray}
Observe that  the first term in the last equality of \eqref{ee3} is equal to
\begin{eqnarray*}
c2^{-p\mu}\sum_{\ell=0}^{\mu-1}\sum_{0\le s\le m_\ell-1} 2^{p(N-l-s\mu)} \nu^p(\Omega_{N-l-s\mu})
\le c 2^{-p\mu} \sum_{k=-N}^{N-1}2^{kp} \nu^p(\Omega_k).
\end{eqnarray*}
If we choose $\mu$ so large that $c\, 2^{ -\mu p}\leq\f12$ and since
everything involved is finite the first term on the right hand side of \eqref{ee3}
can be subtracted from the left hand side of \eqref{ee3}. This yields
$$
\int_{2^{-N}<\cm_{\crec}(\vec f\,)\leq 2^{N+1}}
\!\!\!\! \! \cm_{\crec}(\vec
f\,)^{p}\, \nu  \, dx \leq  2^{p+1}c\, \sum_{\ell=0}^{\mu-1}\sum_{0\le s\le m_\ell}
\sum_{i=i_{s}(l)}^{i_{s+1}(l)-1} 2^{p(N-l-s\mu)} \nu^p (\tilde{A}_i(l)).
$$
By \eqref{ee4} and the generalized H\"older's inequality \eqref{Holder-Orlicz} we obtain
\begin{eqnarray}\label{ee5}
&&\sum_{\ell=0}^{\mu-1}\sum_{0\le s\le m_\ell}
\sum_{i=i_{s}(l)}^{i_{s+1}(l)-1} 2^{p(N-l-s\mu)} \nu^p (\tilde{A}_i(l))\\
&&\hs\leq  c \sum_{\ell=0}^{\mu-1}\sum_{0\le s\le m_\ell}
\sum_{i=i_{s}(l)}^{i_{s+1}(l)-1} \nu^p (\tilde{A}_i(l))
\left[\prod_{j=1}^m   \f 1 {|\tilde{A}_i(l)|} \int
_{\tilde{A}_i(l)} |f_j |   dx\right]^p  \nonumber\\
&&\hs\leq  c \sum_{\ell=0}^{\mu-1}\sum_{0\le s\le m_\ell}
\sum_{i=i_{s}(l)}^{i_{s+1}(l)-1} \nu^p (\tilde{A}_i(l))
\left[\prod_{j=1}^m \|f_j w_j\|_{\bar\Psi_j, \tilde{A}_i(l)} \|w_j^{-1}\|_{\Psi_j, \tilde{A}_i(l)}\right]^p\nonumber\\
&&\hs  \le c\sum_{\ell=0}^{\mu-1}\sum_{0\le s\le m_\ell}
\sum_{i=i_{s}(l)}^{i_{s+1}(l)-1}
\left[\prod_{j=1}^m \|f_j w_j\|_{\bar\Psi_j, \tilde{A}_i(l)}\right]^p|\tilde{A}_i(l))|,\nonumber
\end{eqnarray}
where in the last step we use the assumption \eqref{m+1-weight}.

For each $l$ we let $I(l)$ be the index set of $\{\tilde{A}_i(l)\}_{0\le s\le m_\ell,\,
 i_{s}(l)\le i<i_{s+1}(l)}$, and
$$E_1(l)= \tilde{A}_1(l) \q \& \q E_i(l)= \tilde{A}_i(l) \setminus \bigcup_{s<i}
\tilde{A}_s(l) \qqq \forall\,i\in I(l).
$$
Since the sequences $\{\tilde{A}_i(l)\}_{i\in
I(l)}$ are $\la$--scattered with respect to the Lebesgue measure, for each $i$
$
|\tilde{A}_i(l)| \leq \frac{1}{1-\la }|E_i(l)|.
$
Then we have the following estimate for \eqref{ee5}
\begin{eqnarray}\label{cont3}
\f{C}{1-\lambda  }   c\sum_{l=0}^{\mu-1}
\sum_{i\in I(l)}
\left[\prod_{j=1}^m \|f_j w_j\|_{\bar\Psi_j, \tilde{A}_i(l)}\right]^p |E_i(l)|.
\end{eqnarray}
The collection $\{E_i(l)\}_{i\in I(l)}$ is a disjoint family, we
can therefore use the fact that $\cm_{\ovec{\bar\Psi}, \cb}$ is
bounded from $L^{p_1}(\rn)\times\cdots\times L^{p_m}(\rn)$
to $L^p(\rn)$  so as to estimate this last equation \eqref{cont3}. Then
\begin{eqnarray*}
&& \sum_{l=0}^{\mu-1} \sum_{i\in I(l)}  \,
 \int _{E_i(l)}   \lf[\cm_{\ovec{\bar\Psi}, \cb} ((f_1w_1,\cdots f_mw_m))(x)\r]^p\, dx \\
&&\hs\leq  c_{\mu} \int _{ \rn}  \lf[\cm_{\ovec{\bar\Psi}, \cb}
((f_1w_1,\cdots f_mw_m))(x)\r]^p\, dx\\
&&\hs\leq C \prod_{j=1}^m   \|f_jw_j\|_{L^{p_j}(\rn)}^p.
 \end{eqnarray*}
Letting $N\to \nf$ yields the desired assertion of the theorem.
\end{proof}

\begin{remark}\rm\label{r_t2}
We point out that the fact that Theorem \ref{t2}(ii) implies Theorem \ref{t2}(iv)
can be deduced
by proceeding as in  the proof of Theorem \ref{t1}.
Indeed, we  will prove the required
estimate for the quantity
\begin{equation*}
\int_{2^{-N}<M_\Phi^\crec(f\,)\leq 2^{N+1}} M_\Phi^\crec(f\,)(x)^{p}\, w(x)\, dx
\ls \int_{\rn} f(y)^p M_\crec w(y)\,dy,
\end{equation*}
where $N$ is a large integer. We use the same covering argument
of Theorem \ref{t1} replacing \eqref{bigger2^k}  by
\[\f 1 {|R_\al^k|} \int _{R_\al^k} |f(y)| \, dy >2^k.\]
Repeating equations (\ref{ee4}) and (\ref{ee3})  , we will get
\begin{eqnarray*}
&&\sum_{\ell=0}^{\mu-1}\sum_{0\le s\le m_\ell}
\sum_{i=i_{s}(l)}^{i_{s+1}(l)-1} 2^{p(N-l-s\mu)} w (\tilde{A}_i(l))\\
&&\hs\leq  c \sum_{\ell=0}^{\mu-1}\sum_{0\le s\le m_\ell}
\sum_{i=i_{s}(l)}^{i_{s+1}(l)-1} w (\tilde{A}_i(l))
\|f\| _{\Phi, \tilde{A}_i(l)}^p  \nonumber\\
&&\hs\leq  c \sum_{\ell=0}^{\mu-1}\sum_{0\le s\le m_\ell}
\sum_{i=i_{s}(l)}^{i_{s+1}(l)-1}
\lf\|f \lf(\frac{w(\tilde{A}_i(l))}{|\tilde{A}_i(l)|}\r)^{1/p}\r\| _{\Phi, \tilde{A}_i(l)}^p
|\tilde{A}_i(l)|\nonumber\\
&&\hs  \le c\sum_{\ell=0}^{\mu-1}\sum_{0\le s\le m_\ell}
\sum_{i=i_{s}(l)}^{i_{s+1}(l)-1}
\lf\|f (M_\crec w)^{1/p}\r\| _{\Phi, \tilde{A}_i(l)}^p
|\tilde{A}_i(l)|,\nonumber
\end{eqnarray*}
where in the penultimate step we used the generalized H\"older's inequality
\eqref{Holder-Orlicz}. Finally, we will obtain the claimed conclusion
using the fact that the operator $M_\Phi^\crec $ is bounded on $L^p(\rn)$. The
details are left to the reader.
\end{remark}



\bibliographystyle{amsalpha}

\end{document}